\tikzstyle{vertex}=[circle,fill=black!25,minimum size=10pt,inner sep=0pt]
\tikzstyle{edge} = [draw,thick,-]
\def\NP{{\cal NP}}
\def\RR{\mathbb R}
\newcommand{\degree}[2][]{\mathrm{deg}_{#1}\left(#2\right)}
\newenvironment{proofsketch}
 {\begin{trivlist} \item[] {\bf Proof (sketch).\ }}{\hfill$\Box$ \end{trivlist}}
\newtheorem{conjecture}{Conjecture}[section]
\newtheorem{theorem}{Theorem}[section]
\newtheorem{lemma}{Lemma}[section]
\newtheorem{proposition}{Proposition}[section]
\newenvironment{proof}
 {\begin{trivlist} \item[] {\bf Proof.\ }}{\hfill$\Box$ \end{trivlist}}
\title{\vspace{-20mm}A Two-Level Graph Partitioning Problem Arising\\
in Mobile Wireless Communications}
\author{%
  Jamie Fairbrother\thanks{STOR-i Centre for Doctoral Training,
Lancaster University, Lancaster LA1 4YF, UK. E-mail: {\tt j.fairbrother@lancaster.ac.uk}} %
  \and Adam N.\ Letchford\thanks{Department of Management Science, Lancaster University, Lancaster LA1 4YW, United Kingdom. E-mail: {\tt a.n.letchford@lancaster.ac.uk}}
  \and Keith Briggs\thanks{Wireless Research Group, BT Technology, Service \& Operations, Martlesham Heath, UK.  Email: {\tt keith.briggs@bt.com}}
}
\date{May 2017}
\begin{document}

\maketitle

\begin{abstract}
\noindent
In the {\em $k$-partition problem} ($k$-PP), one is given an edge-weighted undirected graph, and one
must partition the node set into at most $k$ subsets, in order to minimise (or maximise) the total weight
of the edges that have their end-nodes in the same cluster. Various hierarchical variants of this problem
have been studied in the context of data mining. We consider a `two-level' variant that arises in mobile
wireless communications. We show that an exact algorithm based on intelligent preprocessing, cutting planes
and symmetry-breaking is capable of solving small- and medium-size instances to proven optimality, and
providing strong lower bounds for larger instances.
\\*[2mm]
{\bf Keywords}: graph partitioning, integer programming, cutting planes, telecommunications.
\end{abstract}

\section{Introduction}

Telecommunications has proven to be a rich source of interesting optimisation problems \cite{resende2007handbook}.
In the case of {\em wireless}\/ communications, the hardest (and most strategic) problem is
{\em wireless network design}, which involves the simultaneous
determination of cell locations and shapes, base station locations, power levels and frequency
channels (e.g., \cite{mannino2007}). On a more tactical level, one finds various
{\em frequency assignment}\/ problems, which are concerned solely with the assignment of available
frequency bands to wireless devices (e.g., \cite{aardal2007models}).

Recently, we came across a rather different optimisation problem, in the
context of {\em mobile}\/ wireless communications.  The technology is the new
4G (LTE) standard, and the essence of the problem is as follows. There are a
number of devices with known locations. Each device must be assigned a positive
integer {\em identifier}.  In the LTE standard, this is called a Physical Cell
Identifier or PCI, but to keep the discussion general we will simply use the
term ID.
If two devices are close to each other (according to some measure of
closeness), they are said to be {\em neighbours}.  Two neighbouring devices
must not have the same ID. We are also given two small integers $k, k' \geqslant 2$.
If the IDs of two neighbouring devices are the same {\em modulo $k$}, it causes
interference. Moreover, some additional interference occurs if they are the same
modulo $k k'$. The task is to
assign IDs to devices in such a way as to minimise the total interference.

The problem turns out to be a generalisation of a well-known $\NP$-hard combinatorial optimisation
problem called the {\em $k$-partition problem}\/ or $k$-PP. For reasons which will become clear later,
we call our problem the {\em 2-level partition problem}\/ or 2L-PP. Here we develop an exact
algorithm for the 2L-PP, which turns out be capable of solving small - and medium-sized instances
to proven optimality, and providing strong lower bounds for larger instances.

The structure of the paper is as follows. In Section~\ref{se:lit}, the literature on the $k$-PP is reviewed.
In Section~\ref{se:theory}, we formulate our problem as an integer program (IP) and derive some
valid linear inequalities (i.e.\ cutting planes). In Section~\ref{se:algorithm}, we describe our exact
algorithm in detail. In Section~\ref{se:experiments}, we describe some computational experiments and
analyse the results. Finally, some concluding remarks are made in Section~\ref{se:conclusion}.

\section{Literature Review} \label{se:lit}

Since the 2L-PP is a generalisation of the $k$-PP, we now review the literature on the $k$-PP.
We define the $k$-PP in Subsection~\ref{sub:lit1}. The main IP formulations are presented in
Subsection~\ref{sub:lit2}. The remaining two subsections cover cutting planes and algorithms for
generating them, respectively.

We remark that some other multilevel graph partitioning problems have been studied in the data mining
literature; see, e.g., \cite{chatziafratis,sanders2011multilevel}. In those problems, however, neither the
number of clusters nor the number of levels is fixed. For this reason, we do not consider them further.

\subsection{The $k$-partition problem} \label{sub:lit1}

The $k$-PP was first defined in \cite{carlson1966scheduling}. We are given a (simple, loopless)
undirected graph $G$, with vertex set $V = \{1, \ldots, n\}$ and edge set $E$, a rational weight $w_e$
for each edge $e \in E$, and an integer $k$ with $2 \leqslant  k \leqslant  n$. The task is to partition
$V$ into $k$ or fewer subsets (called ``clusters" or ``colours''), such that the sum of the weights
of the edges that have both end-vertices in the same cluster is minimised.

The $k$-PP has applications in scheduling, statistical clustering, numerical linear algebra,
telecommunications, VLSI layout and statistical physics (see, e.g., \cite{carlson1966scheduling,eisenblatter2002semidefinite,ghaddar2011branch,rendl2012semidefinite}).
It is strongly $\NP$-hard for any fixed $k\geqslant 3$, since it includes as a special case the
problem of testing whether a graph is $k$-colourable. It is also strongly $\NP$-hard when $k=2$, since it
is then equivalent the well-known {\em max-cut}\/ problem, and when $k=n$, since it is then equivalent to the
{\em clique partitioning}\/ problem \cite{grotschel1989cutting,grotschel1990facets}.

\subsection{Formulations of the $k$-PP} \label{sub:lit2}

Chopra \& Rao \cite{chopra1993partition} present two different IP formulations for the $k$-PP.
 In the first formulation, there are two sets of binary variables. For each $v \in V$ and for
$c=1, \ldots, k$, let $x_{vc}$ be a binary variable, taking the value $1$ if and only if vertex $v$
has colour $c$. For each edge $e \in E$, let $y_e$ be an additional binary variable, taking the
value $1$ if and only if both end-nodes of $e$ have the same colour. Then we have the following optimization problem:
\begin{eqnarray}
\nonumber
\min        & \sum_{e \in E} w_e y_e & \\
\label{eq:assign}
\mbox{s.t.} &\sum_{c=1}^k x_{vc} = 1    & (v \in V) \\
\label{eq:tri1}
           & y_{uv} \geqslant x_{uc} + x_{vc} - 1  & (\{u,v\} \in E, \, c = 1, \ldots, k) \\
           & x_{uc} \geqslant x_{vc} + y_{uv} - 1  & (\{u,v\} \in E, \, c = 1, \ldots, k) \\
\label{eq:tri3}
           & x_{vc} \geqslant x_{uc} + y_{uv} - 1  & (\{u,v\} \in E, \, c = 1, \ldots, k) \\
\nonumber
            & x_{vc} \in \{0,1\}    & (v \in V, \, c = 1, \ldots, k) \\
\nonumber
            & y_{uv} \in \{0,1\}    & (\{u,v\} \in E).
\end{eqnarray}
The equations (\ref{eq:assign}) force each node to be given exactly one colour,
and the constraints (\ref{eq:tri1})--(\ref{eq:tri3}) ensure that the $y$ variables take
the value $1$ when they are supposed to.

Note that this IP has ${\cal O}(m+nk)$ variables and constraints, where $m=|E|$. It therefore
seems suitable when $k$ is small and $G$ is sparse. Unfortunately, it has a very weak linear
programming (LP) relaxation. Indeed, if we set all $x$ variables to $1/k$ and all $y$ variables
to $0$, we obtain the trivial lower bound of $0$. Moreover, it suffers from {\em symmetry}, in the
sense that given any feasible solution, there exist $k!$ solutions of the same cost.
(See Margot \cite{margot2010symmetry} for a tutorial and survey on symmetry issues in integer
programming.)

The second IP formulation is obtained by dropping the $x$ variables, but having a $y$ variable
for {\em every}\/ pair of nodes. That is, for each pair of nodes $\{u,v\}$, let $y_{uv}$ be a binary variable,
taking the value $1$ if and only if $u$ and $v$ have the same colour. Then:
\begin{eqnarray}
\nonumber
\min        & \sum_{e \in E} w_e y_e & \\
\label{eq:clq}
\mbox{s.t.} & \sum_{u, v \in C} y_{uv} \geqslant 1 & (C \subset V: |C| = k+1)\\
\label{eq:trans}
         & y_{uv} \geqslant y_{uw} + y_{vw} - 1  & (\{u,v,w\} \subset V) \\
\nonumber
           & y_{uv} \in \{0,1\}    & (\{u,v\} \subset V).
\end{eqnarray}
The constraints (\ref{eq:clq}), called {\em clique}\/ inequalities, ensure that, in any set of $k+1$
nodes, at least two receive the same colour. The constraints (\ref{eq:trans}) enforce
{\em transitivity}; that is, if nodes $u$ and $w$ have the same colour, and nodes $v$ and $w$
have the same colour, then nodes $u$ and $v$ must also have the same colour.

A drawback of the second IP formulation is that it has ${\cal O}(n^2)$ variables and ${\cal O}(n^{k+1})$
constraints, and it cannot exploit any special structure that $G$ may have (such as sparsity). 

A third IP formulation, based on so-called {\em representatives}, is studied in \cite{ales2016}.
There also exist several {\em semidefinite programming}\/ relaxations of the $k$-PP (see, e.g., \cite{frieze1997improved,eisenblatter2002semidefinite,ghaddar2011branch,rendl2012semidefinite,anjos2013solving,sotirov2013efficient,letchford2016projection}). For the sake of brevity e do not go into details.

\subsection{Cutting planes} \label{sub:lit3}

Chopra \& Rao \cite{chopra1993partition} present several families of valid linear inequalities
(i.e.\ cutting planes), which can be used to strengthen the LP relaxation of the
above formulations. For our purposes, the most important turned out to be the
\emph{generalised clique}\/ inequalities. In the case of the first IP formulation, they take the form
\begin{equation} \label{eq:gen-clq}
\sum_{u, v \in C} y_{uv} \, \geqslant \, \binom{t+1}{2} r + \binom{t}{2} (k-r),
\end{equation}
where $C \subseteq V$ is a clique (set of pairwise adjacent nodes) in $G$ with $|C| > k$, and
$t$ and $r$ denote $\big\lfloor |C|/k \big\rfloor$ and $r = |C| \bmod k$, respectively. In the
case of the second IP formulation, they must be defined for any $C \subseteq V$ with $|C| > k$
(since every set of nodes forms a clique in a complete graph). In either case, they define facets
of the associated polytope when $k \geqslant 3$ and $r \ne 0$. Note that they reduce to the
clique inequalities (\ref{eq:clq}) when $|C| = k+1$.

Further inequalities for the first IP formulation can be found in \cite{chopra1993partition,letchford2016projection}.
Further inequalities for the second formulation can be found in, e.g., \cite{grotschel1990facets,deza1990complete,deza1992clique,chopra1993partition,chopra1995facets,oosten2001clique}.

\subsection{Separation algorithms} \label{sub:lit4}

For a given family of valid inequalities, a \emph{separation algorithm} is an algorithm which
takes an LP solution and searches for violated inequalities in that family \cite{grotschel1988geometric}.

By brute-force enumeration, one can solve the separation problem for the inequalities
(\ref{eq:tri1})--(\ref{eq:tri3}) in ${\cal  O}(km)$ time, for the transitivity inequalities (\ref{eq:trans})
in ${\cal  O}\big( n^3 \big)$ time, and for the clique inequalities (\ref{eq:clq}) in ${\cal O}\big(n^{k+1}\big)$
time. It is stated in \cite{chopra1993partition} that separation of the generalised clique inequalities
(\ref{eq:gen-clq}) is $\NP$-hard. An explicit proof, using a reduction from the max-clique problem,
is given in \cite{eisenblatter2001frequency}. Heuristics for clique and generalised clique
separation are presented in \cite{eisenblatter2001frequency,kaibel2011orbitopal}.

Separation results for other inequalities for the first IP formulation can be found in
\cite{chopra1993partition,letchford2016projection}. Separation results for the second formulation
can be found in, e.g.,\cite{grotschel1989cutting,deza1992clique,caprara1996zerohalf,borndorfer2000set,oosten2001clique,letchford2001disjunctive,muller2002transitive}. For
some computational results with various separation algorithms, see \cite{desousa2016}.

\section{Formulation and Valid Inequalities} \label{se:theory}

In this section, we give an IP formulation of the 2L-PP (Subsection~\ref{sub:th-form}) and
derive some valid inequalities (\ref{sub:th-valid}). We also show how to modify the
formulation to address issues of {\em symmetry} (\ref{sub:th-sym}).

\subsection{Integer programming formulation} \label{sub:th-form}

An instance of the 2L-PP is given by an undirected graph $G=(V,E)$, integers $k, k' \geqslant 2$, and weights
$w, w' \in \RR$, with $w \ge w' > 0$. Each node in $V$ corresponds to a device, and a pair of nodes is
connected by an edge if and only if the corresponding devices are neighbours. The weights $w$ and $w'$
represent the importance given to interference modulo $k$ and modulo $k k'$, respectively.

Now, let us call the integers in $\{0, \ldots, k k' - 1\}$ {\em colours}. Assigning the colour $c$ to a node
corresponds to giving the corresponding device an ID that is congruent to $c$ modulo $k k'$. Then, the
2L-PP effectively calls for a colouring of the nodes of $G$ such that the following quantity is minimised:
$w'$ times the number of edges whose end-nodes have the same colour, plus $w$ times the number
of edges whose end-nodes have the same colour modulo $k$.

To formulate the 2L-PP as an IP, we modify the first formulation mentioned in Subsection~\ref{sub:lit2}.
We have three set of binary variables. For each $v \in V$ and for $c=0, \ldots, k k'-1$, let $x_{vc}$ be a binary
variable, taking the value $1$ if and only if vertex $v$ has colour $c$. For each edge $e \in E$, define two
binary variables $y_e$ and $z_e$, taking the value $1$ if and only if both end-nodes of $e$ have the same
colour modulo $k$, or the same colour, respectively. Then we have:
\begin{eqnarray}
\label{eq:our-obj}
\min        & w \, \sum_{\{u,v\} \in E} y_{uv} + w' \, \sum_{\{u,v\} \in E} z_{uv} & \\
\label{eq:our-x}
\mbox{s.t.} & \sum_{c=0}^{k k'-1} x_{vc} = 1    & (v \in V) \\
\label{eq:our-y}
         & y_{uv} \geqslant \sum_{r=0}^{k'-1} \left( x_{u,c+rk} + x_{v,c+rk} \right) - 1  & (\{u,v\} \in E, \, c = 0, \ldots, k-1) \\
\label{eq:our-z}
         & z_{uv} \geqslant x_{uc} + x_{vc} - 1  & (\{u,v\} \in E, \, c = 0, \ldots, k k' -1) \\
         & x_{vc} \in \{0,1\}    & (v \in V, \, c = 0, \ldots, k k' - 1) \\
         & y_{uv} \in \{0,1\}    & (\{u,v\} \in E) \\
\label{eq:our-bin-z}
         & z_{uv} \in \{0,1\}    & (\{u,v\} \in E).
\end{eqnarray}
The objective function (\ref{eq:our-obj}) is just a weighted sum of the two kinds of interference. The
constraints (\ref{eq:our-x}) state that each node must have a unique colour. The constraints
(\ref{eq:our-y}) and (\ref{eq:our-z}) ensure that the two kinds of interference occur under the stated
conditions. The remaining constraints are just binary conditions.

Note that the above IP has $kk'n + 2m$ variables and $n+k(k'+1)m$ linear constraints. In practice,
this is manageable, since $k$ and $k'$ are typically small and $G$ is typically sparse.

\subsection{Valid inequalities} \label{sub:th-valid}

Unfortunately, our IP formulation of the 2L-PP shares the same drawbacks as the first formulation
of the $k$-PP mentioned in Subsection~\ref{sub:lit2}: it has a very weak LP relaxation (giving a trivial
lower bound of zero), and it suffers from a high degree of symmetry.

To strengthen the LP relaxation, we add valid linear inequalities from three families. For a clique
$C \subset V$, let $y(C)$ and $z(C)$ denote $\sum_{\{u,v\} \subset C} y_{uv}$ and
$\sum_{\{u,v\} \subset C} z_{uv}$, respectively. The first two families of inequalities are straightforward
adaptations of the generalised clique inequalities (\ref{eq:gen-clq}) for the $k$-PP:
\begin{proposition}
The following inequalities are satisfied by all feasible solutions of the 2L-PP:
\begin{itemize}
\item ``$y$-clique" inequalities, which take the form:
\begin{equation} \label{eq:y-clq}
y(C) \, \geqslant \, \binom{t+1}{2} r + \binom{t}{2} (k-r),
\end{equation}
where $C \subseteq V$ is a clique with $|C| > k$, $t = \lfloor |C|/k \rfloor$ and $r = |C| \bmod k$;
\item ``$z$-clique" inequalities, which take the form:
\begin{equation} \label{eq:z-clq}
z(C) \geqslant \binom{T+1}{2} R + \binom{T}{2} (kk' - R),
\end{equation}
where $C \subseteq V$ is a clique with $|C| > kk'$, $T = \left\lfloor \frac{|C|}{kk'} \right\rfloor$ and $R = |C| \bmod kk'$.
\end{itemize}
\end{proposition}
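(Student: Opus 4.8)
The plan is to reduce each of the two inequalities to the known validity of the generalised clique inequality (\ref{eq:gen-clq}) for the $k$-PP, applied with an appropriate number of colours. The key observation is that both $y$ and $z$ record a ``same-block'' relation on the nodes of a clique, and in each case the relation partitions the clique into a bounded number of classes.

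For the $y$-clique inequalities, I would argue as follows. Fix a clique $C$ with $|C|>k$ and any feasible colouring. Group the $kk'$ colours into $k$ \emph{super-colours}, where super-colour $c\in\{0,\dots,k-1\}$ consists of the colours $\{c, c+k, \dots, c+(k'-1)k\}$; two nodes have the same colour modulo $k$ exactly when they lie in the same super-colour. Then $y_{uv}=1$ for $\{u,v\}\subset C$ precisely when $u$ and $v$ receive the same super-colour, so $y(C)$ counts the number of monochromatic pairs of $C$ under a colouring into at most $k$ super-colours. Minimising this count is exactly the combinatorial quantity bounded below by the right-hand side of (\ref{eq:gen-clq}): given $|C|$ items distributed among $k$ classes, the number of same-class pairs $\sum_{i}\binom{|C_i|}{2}$ is minimised by making the class sizes as equal as possible, i.e.\ $r$ classes of size $t+1$ and $k-r$ classes of size $t$ with $t=\lfloor|C|/k\rfloor$, $r=|C|\bmod k$, giving $\binom{t+1}{2}r+\binom{t}{2}(k-r)$. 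Hence $y(C)$ is at least this value, which is (\ref{eq:y-clq}).

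For the $z$-clique inequalities the argument is the same, but simpler, since $z_{uv}=1$ iff $u$ and $v$ have exactly the same colour: now there are $kk'$ classes rather than $k$, and the same ``balanced partition'' bound gives $z(C)\geqslant\binom{T+1}{2}R+\binom{T}{2}(kk'-R)$ with $T=\lfloor|C|/(kk')\rfloor$ and $R=|C|\bmod kk'$, which requires $|C|>kk'$ to be non-trivial. In both cases one may simply cite the validity of (\ref{eq:gen-clq}) for the $k$-PP (with $k$ replaced by $k$, respectively $kk'$) once the reinterpretation of $y(C)$, $z(C)$ as monochromatic-pair counts under the appropriate colouring is in place.

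The only real content is the elementary extremal fact that $\sum_i\binom{a_i}{2}$ over non-negative integers $a_i$ summing to $N$, with at most $\kappa$ nonzero terms, is minimised by the balanced partition; this is a short convexity/exchange argument (moving one unit from a larger part to a smaller part strictly decreases the sum whenever two parts differ by at least two), and in any case it is already implicit in the cited validity of (\ref{eq:gen-clq}). So I do not anticipate a genuine obstacle here; the main thing to get right is the bookkeeping that the super-colour grouping makes $y(C)$ literally an instance of the $k$-PP monochromatic-edge count on the complete graph on $C$, so that the existing result applies verbatim.
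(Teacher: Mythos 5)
Your proposal is correct and matches the paper's proof: the paper likewise observes that in any feasible solution $y$ and $z$ are incidence vectors of a $k$-partition and a $kk'$-partition of $V$, respectively, and then invokes the Chopra--Rao generalised clique inequalities. Your explicit spelling-out of the balanced-partition extremal bound is just the content of that cited result, so there is no substantive difference.
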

\begin{proof}
This follows from the result of Chopra \& Rao \cite{chopra1993partition} mentioned in Subsection~\ref{sub:lit3},
together with the fact that, in a feasible IP solution, the $y$ and $z$ vectors are the incidence vectors of a
$k$-partition and a $k k'$-partition, respectively.
\end{proof}

The third family of inequalities, which is completely new, is described in the following theorem.
\begin{theorem} \label{th:yz-valid}
For all cliques $C \subseteq V$ with $|C| > k'$, the following ``$(y,z)$-clique" inequalities are valid:
\begin{equation} \label{eq:yz}
k' \, z(C) \; \geqslant \; y(C) - t' \binom{k'}{2} - \binom{r'}{2},
\end{equation}
where $t' = \lfloor |C| /k' \rfloor$ and $r'=|C| \bmod k'$.
\end{theorem}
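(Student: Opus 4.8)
The plan is to fix a feasible IP solution and a clique $C$, and to show the inequality holds by analysing, colour class by colour class modulo $k$, how the vertices of $C$ distribute among the $kk'$ colours. First I would introduce notation: for each residue $i \in \{0,\dots,k-1\}$, let $C_i \subseteq C$ be the set of vertices of $C$ whose colour is congruent to $i$ modulo $k$, and let $n_i = |C_i|$, so that $\sum_i n_i = |C|$. Within $C_i$, the vertices are further split among the $k'$ colours $i, i+k, \dots, i+(k'-1)k$; write $n_{i,0},\dots,n_{i,k'-1}$ for the sizes of these sub-blocks, so $\sum_j n_{i,j} = n_i$. The key observation is that every edge of $C$ inside $C_i$ contributes $1$ to $y(C)$, and every edge of $C$ inside one of the sub-blocks contributes $1$ to $z(C)$; hence $y(C) = \sum_{i=0}^{k-1} \binom{n_i}{2}$ and $z(C) = \sum_{i=0}^{k-1} \sum_{j=0}^{k'-1} \binom{n_{i,j}}{2}$.

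The heart of the argument is then a purely combinatorial inequality: for a single block of size $n_i$ split into $k'$ parts of sizes $n_{i,0},\dots,n_{i,k'-1}$, one has
\begin{equation*}
k' \sum_{j=0}^{k'-1} \binom{n_{i,j}}{2} \;\geqslant\; \binom{n_i}{2} - (\text{something depending only on } n_i \bmod k').
\end{equation*}
To see this, note $\sum_j \binom{n_{i,j}}{2}$ is minimised, for fixed $n_i$, when the parts are as equal as possible, i.e.\ when $r_i := n_i \bmod k'$ of them have size $\lceil n_i/k' \rceil$ and the rest have size $\lfloor n_i/k' \rfloor$; this is the standard convexity argument for $\binom{\cdot}{2}$. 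Evaluating this minimum and comparing it with $\binom{n_i}{2}/k'$ via the identity $\binom{n_i}{2} = k'\binom{\lfloor n_i/k'\rfloor}{2} + \binom{r_i}{2} + r_i \lfloor n_i/k' \rfloor (\,\cdot\,)$-type algebra, one extracts a clean bound of the shape $k'\sum_j \binom{n_{i,j}}{2} \geqslant \binom{n_i}{2} - \binom{r_i}{2} - (\text{linear correction})$. I would carry out this single-block computation carefully and then sum over $i = 0,\dots,k-1$.

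The final step is to control the sum of the per-block correction terms by the global quantities $t' = \lfloor |C|/k' \rfloor$ and $r' = |C| \bmod k'$. Summing gives $k' z(C) \geqslant y(C) - \sum_i \binom{r_i}{2} - (\text{linear terms})$, and since $\sum_i n_i = |C|$ one has $\sum_i \lfloor n_i/k' \rfloor \leqslant t'$ and the $r_i$'s are constrained; a subsidiary convexity/rearrangement argument shows $\sum_i \big(\text{correction}(n_i)\big)$ is maximised when the $n_i$ are concentrated, yielding exactly $t'\binom{k'}{2} + \binom{r'}{2}$ as the worst case. I expect this last aggregation step — showing that the sum of the individual block bounds telescopes to precisely the stated right-hand side, rather than something weaker — to be the main obstacle, since it requires identifying the extremal configuration of the $n_i$ and verifying the resulting identity; the single-block inequality itself is routine convexity.
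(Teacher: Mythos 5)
Your proposal follows essentially the same route as the paper's proof: the paper also partitions $C$ into the residue classes $S_c$ modulo $k$ (your $C_i$) and their colour sub-blocks $W_{c+tk}$ (your $n_{i,j}$), writes $y(C)=\sum_c\binom{|S_c|}{2}$ and $z(C)=\sum_c\sum_t\binom{|W_{c+tk}|}{2}$, applies the ``as equal as possible'' convexity bound within each block, and then aggregates. The final step you flag as the main obstacle is handled in the paper by observing that the linear part $\tfrac12(k'-1)\sum_c n_c=\tfrac12(k'-1)|C|$ is distribution-independent, so one only needs $\sum_c r_c(k'-r_c)\geqslant r'(k'-r')$, which holds since the $r_c$ sum to $r'$ modulo $k'$; this yields exactly the stated right-hand side.
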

\begin{proof}
See the Appendix.
\end{proof}

The following two lemmas and theorem give necessary conditions for the inequalities presented so far
to be non-dominated (i.e., not implied by other inequalities).
\begin{lemma}
A necessary condition for the $y$-clique inequality \eqref{eq:y-clq} to be non-dominated is that
$r \ne 0$.
\end{lemma}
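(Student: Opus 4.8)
The plan is to establish the contrapositive: if $r=0$, then the inequality \eqref{eq:y-clq} for $C$ is implied by a nonnegative combination of other valid inequalities, hence dominated. So suppose $r=0$. Then $|C|=tk$ for an integer $t$, and since $|C|>k$ we have $t\geqslant 2$. With $r=0$ the term $\binom{t+1}{2}r$ drops out, so the right-hand side of \eqref{eq:y-clq} is simply $\binom{t}{2}k$, and the task becomes: exhibit $y(C)\geqslant\binom{t}{2}k$ as a nonnegative combination of $y$-clique inequalities for cliques different from $C$.

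The combination I would use is the uniform one over the $tk$ ``leave-one-out'' subcliques. For each $u\in C$ set $C_u:=C\setminus\{u\}$; this is a clique of size $tk-1$, and since $t\geqslant 2$ and $k\geqslant 2$ we have $tk-1>k$, so the $y$-clique inequality of the Proposition applies to $C_u$. Writing $tk-1=(t-1)k+(k-1)$, the relevant floor and remainder are $t-1$ and $k-1$, so that inequality reads $y(C_u)\geqslant\binom{t}{2}(k-1)+\binom{t-1}{2}$. A short manipulation of binomial coefficients rewrites the right-hand side as $\tfrac12(t-1)(tk-2)$.

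Next I would sum these $tk$ inequalities. On the left, a given pair $\{a,b\}\subset C$ appears in $C_u$ precisely for the $tk-2$ nodes $u\notin\{a,b\}$, so $\sum_{u\in C}y(C_u)=(tk-2)\,y(C)$; on the right the sum is $tk\cdot\tfrac12(t-1)(tk-2)$. Dividing by $tk-2>0$ gives $y(C)\geqslant\tfrac12 tk(t-1)=\binom{t}{2}k$, which is exactly \eqref{eq:y-clq} for $C$. Hence \eqref{eq:y-clq} is the arithmetic mean of the valid $y$-clique inequalities associated with $C_u$, $u\in C$, and is therefore dominated; equivalently, a non-dominated $y$-clique inequality must have $r\neq 0$. (This parallels the classical result for the $k$-PP that the generalised clique inequalities are facet-defining only when $r\neq 0$.)

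The only real content is the binomial-coefficient bookkeeping: verifying $\binom{t}{2}(k-1)+\binom{t-1}{2}=\tfrac12(t-1)(tk-2)$, and checking that after dividing the summed inequality by $tk-2$ one recovers precisely $\binom{t}{2}k$ rather than something weaker. A secondary point to record explicitly is that the hypotheses $r=0$ and $|C|>k$ are exactly what force $t\geqslant 2$, which in turn guarantees $|C_u|>k$ so that the Proposition genuinely applies to each $C_u$; this is the step where a careless argument could slip.
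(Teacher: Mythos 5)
Your proof is correct, but it is worth noting that the paper does not actually prove this lemma at all: its entire ``proof'' is a citation to Chopra \& Rao \cite{chopra1993partition}, where the $r=0$ case of the generalised clique inequalities for the $k$-PP is shown to be redundant. What you have done is reconstruct a self-contained version of that classical argument, and your reconstruction is sound: the bookkeeping $\binom{t}{2}(k-1)+\binom{t-1}{2}=\tfrac12(t-1)(tk-2)$ checks out, the counting $\sum_{u\in C}y(C_u)=(tk-2)\,y(C)$ is right, and you correctly flag that $r=0$ together with $|C|>k$ forces $t\geqslant 2$, which is what guarantees $|C_u|=tk-1>k$ so that the leave-one-out inequalities are themselves instances of \eqref{eq:y-clq}. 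Your argument is in fact exactly the template the authors use themselves one lemma later, for the $(y,z)$-clique inequalities with $r'=0$: there too they sum the inequalities over all $C\setminus\{v\}$ and divide by $|C|-2$. Two small quibbles: the derived combination is not literally an ``arithmetic mean'' (you divide the sum of $tk$ inequalities by $tk-2$, so the coefficients do not sum to one), though it is still a nonnegative combination and that is all domination requires; and strictly speaking you show the inequality is implied by $y$-clique inequalities on smaller cliques, which matches the paper's (and Chopra--Rao's) notion of dominated, so the contrapositive goes through.
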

\begin{proof}
This was already shown by Chopra \& Rao \cite{chopra1993partition}.
\end{proof}

\begin{lemma}
A necessary condition for the $(y,z)$-clique inequality \eqref{eq:yz} to be non-dominated is that
$r' \ne 0$.
\end{lemma}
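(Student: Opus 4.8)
The plan is to show that when $r' = |C| \bmod k' = 0$, the $(y,z)$-clique inequality \eqref{eq:yz} is dominated by a conic combination of a $y$-clique inequality and the trivial bounds $z_{uv} \geqslant 0$, so that it adds nothing to the formulation. When $r' = 0$ we have $|C| = t' k'$, so the right-hand side of \eqref{eq:yz} becomes $y(C) - t' \binom{k'}{2} - \binom{0}{2} = y(C) - t' \binom{k'}{2}$, and \eqref{eq:yz} reads $k'\, z(C) \geqslant y(C) - t'\binom{k'}{2}$.

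First I would observe that since the $z$ variables are nonnegative, the inequality $k'\, z(C) \geqslant y(C) - t'\binom{k'}{2}$ is implied by the single inequality $y(C) \leqslant t'\binom{k'}{2}$ together with $z(C) \geqslant 0$ — but of course $y(C) \leqslant t'\binom{k'}{2}$ is \emph{not} valid in general. So the correct route is instead to exhibit a valid inequality of the form $a\, z(C) + b\, y(C) \geqslant \gamma$ with $a \le k'$, $b \le 1$ and appropriate $\gamma$ that dominates \eqref{eq:yz} on the feasible region. The natural candidate is to combine a $y$-clique inequality on $C$ (or on a suitable subclique) with the $z$-clique inequality, or simply with $z \geqslant 0$. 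Concretely, I expect the cleanest argument is: the $y$-clique inequality \eqref{eq:y-clq} for $C$ gives a lower bound on $y(C)$, and since in any feasible solution the $y$-vector restricted to $C$ is the incidence vector of a $k$-partition of $C$ while the $z$-vector is that of a $kk'$-partition refining it, one can bound $z(C)$ from below directly in terms of the cluster sizes; plugging $r' = 0$ into the convexity/counting argument behind the proof of Theorem~\ref{th:yz-valid} (in the Appendix) shows the slack is always nonnegative, i.e.\ the inequality holds with equality only on a lower-dimensional face and is therefore dominated.

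The concrete steps I would carry out are: (i) substitute $r' = 0$ into \eqref{eq:yz} and record the resulting inequality; (ii) recall from the Appendix proof of Theorem~\ref{th:yz-valid} the underlying pointwise estimate — that for a partition of $|C|$ nodes into color classes, $k'\,z(C) - y(C)$ is minimized by making the class sizes as equal as possible, which produces exactly the right-hand constant; (iii) show that when $k' \mid |C|$, the minimizing configuration has all the ``modulo-$k$'' classes of equal size $|C|/k$ and each split evenly into $k'$ sub-classes, at which point a short calculation gives $k'\,z(C) - y(C) = -t'\binom{k'}{2}$ with no residual term, but that any feasible point \emph{not} of this form satisfies the inequality strictly; (iv) conclude that the inequality is supported only on solutions that also satisfy some $y$-clique inequality or nonnegativity constraint at equality, hence it is a face of lower dimension of the polytope cut out by those inequalities and is dominated.

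The main obstacle, I expect, is making precise the sense of ``non-dominated'' being used and then producing the explicit dominating combination rather than merely arguing the bound is ``never tight.'' In polyhedral terms one wants to write $k'\, z(C) - y(C) + t'\binom{k'}{2}$ as a nonnegative combination of $\big(\text{RHS of valid inequalities}\big) - \big(\text{LHS}\big)$ expressions — most plausibly a positive multiple of the $y$-clique slack for $C$ (or for subcliques of $C$ of size a multiple of $k$) plus nonnegative multiples of $z_{uv} \geqslant 0$. Verifying that the coefficients work out — in particular that the constant $t'\binom{k'}{2}$ matches what the $y$-clique right-hand side contributes when $r' = 0$ — is the delicate bookkeeping step, but it is exactly the kind of identity among binomial coefficients that the $r' = 0$ hypothesis is designed to make come out evenly.
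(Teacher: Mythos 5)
There is a genuine gap: you never produce the dominating conic combination, and the candidates you propose cannot work. Written in $\geqslant 0$ form, the $r'=0$ inequality is $k'\,z(C) - y(C) + t'\binom{k'}{2} \geqslant 0$, in which $y(C)$ appears with a \emph{negative} coefficient. A $y$-clique inequality contributes $y(C)$ with a \emph{positive} coefficient to any conic combination, so no nonnegative combination of $y$-clique inequalities and $z_{uv}\geqslant 0$ can yield it; the only way to get $-y(C)$ from the trivial side is via $y_{uv}\leqslant 1$, and $k'z(C)-y(C)+\binom{|C|}{2}\geqslant 0$ is strictly weaker than the target whenever $t'>1$. Your fallback argument --- that the inequality is tight only on a lower-dimensional set and is therefore dominated --- is both logically insufficient (facet-defining inequalities are also tight only on a proper face) and factually off: when $r'=0$ the inequality \emph{is} tight at feasible integer points, e.g.\ when all of $C$ receives the same colour modulo $k$ and is split evenly into $k'$ refined classes of size $t'$ each, one checks $k'\cdot k'\binom{t'}{2} = \binom{k't'}{2} - t'\binom{k'}{2}$ exactly.

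The paper's proof uses a different and self-referential source of domination that you did not consider: it sums the $(y,z)$-clique inequalities for the one-node-deleted subcliques $C\setminus\{v\}$ over all $v\in C$ (each such subclique has $|C|-1\equiv k'-1 \pmod{k'}$, hence $r'\ne 0$), observes that each pair of nodes is counted $|C|-2$ times, and divides by $|C|-2$ to recover exactly the $r'=0$ inequality for $C$. To repair your argument you would need to replace the $y$-clique/$z\geqslant 0$ combination with this (or some other explicitly valid) conic combination; the counting identity $(t'-1)\binom{k'}{2}+\binom{k'-1}{2}=(|C|-2)(k'-1)/2$ is the bookkeeping step that makes it come out evenly.
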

\begin{proof}
Suppose that  $r'=0$. The $(y,z)$-clique inequality for $C$ can be written as:
\begin{equation} \label{eq:yz-weak}
k' \, z(C) \; \geqslant \; y(C) - |C| (k'-1)/2.
\end{equation}
Now let $v$ be an arbitrary node in $C$. The $(y,z)$-clique inequality for the set $C \setminus \{v\}$ can
be written as:
\[
k' \, z(C \setminus \{v\}) \; \geqslant \; y(C \setminus \{v\}) \, - \, (|C|-2) (k'-1)/2.
\]
Summing this up over all $v \in C$ yields
\[
k'(|C|-2) \, z(C) \; \geqslant \; (|C|-2) \, y(C) \, - \, |C| (|C|-2) (k'-1)/2.
\]
Dividing this by $|C| - 2$ yields the inequality (\ref{eq:yz-weak}).
\end{proof}

\begin{theorem} \label{th:z-redundant}
A necessary condition for the $z$-clique inequality \eqref{eq:z-clq} to be non-dominated is that
$1 < R < kk' - 1$.
\end{theorem}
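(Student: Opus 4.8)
The plan is to prove the contrapositive: whenever $R \in \{0, 1, kk'-1\}$, the $z$-clique inequality \eqref{eq:z-clq} for $C$ is implied by other valid inequalities. The main idea is that, since $|C| > kk' > \max\{k,k'\}$, both the $y$-clique inequality \eqref{eq:y-clq} and the $(y,z)$-clique inequality \eqref{eq:yz} apply to the clique $C$. Adding these two inequalities together, the $y(C)$ terms cancel, and after dividing by $k'$ one is left with a lower bound on $z(C)$, namely
\[
z(C) \;\geqslant\; \frac{1}{k'}\left[\binom{t+1}{2}r + \binom{t}{2}(k-r) - t'\binom{k'}{2} - \binom{r'}{2}\right],
\]
where $n = |C|$, $t = \lfloor n/k \rfloor$, $r = n \bmod k$, $t' = \lfloor n/k' \rfloor$ and $r' = n \bmod k'$. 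Since this is a non-negative combination of two valid inequalities, neither of which is the $z$-clique inequality itself, it suffices to show that in each of the three cases the right-hand side above equals $\binom{T+1}{2}R + \binom{T}{2}(kk'-R)$, i.e.\ the right-hand side of \eqref{eq:z-clq}; that identity makes \eqref{eq:z-clq} redundant.

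I would then verify the three cases. For $R = 0$ we have $n = Tkk'$, hence $r = r' = 0$, $t = Tk'$, $t' = Tk$, and a short computation shows both sides equal $\binom{T}{2}kk'$. For $R = 1$ we have $n = Tkk' + 1$, hence $r = r' = 1$, $t = Tk'$, $t' = Tk$, and, using $\binom{t+1}{2} - \binom{t}{2} = t$ and $\binom{1}{2} = 0$, both sides reduce to $\binom{T+1}{2} + \binom{T}{2}(kk'-1)$. For $R = kk'-1$ we have $n = (T+1)kk' - 1$, hence $r = k-1$, $r' = k'-1$, $t = (T+1)k' - 1$, $t' = (T+1)k - 1$, and both sides reduce to $\binom{T+1}{2}(kk'-1) + \binom{T}{2}$. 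Each of these is an elementary manipulation of binomial coefficients; the substitution $b = (T+1)k'$ (so that $t+1 = b$ and $t' = bk/k' - 1$) is a convenient way to keep the last case compact.

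The only real difficulty is the bookkeeping in the third case, where the floor and remainder expressions are most awkward; everything else is routine. I note that, for $R = 0$, there is also an alternative self-contained proof that mirrors the proofs of the preceding lemmas: sum the $z$-clique inequality over the $n$ sub-cliques of $C$ of size $n-1$ (these are valid because $n \geqslant 2kk'$, so $n-1 > kk'$, and for each the remainder is $kk'-1$), then divide by $n-2$; one checks that this reproduces \eqref{eq:z-clq} for $C$ exactly. Either route completes the argument.
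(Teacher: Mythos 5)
Your proposal is correct and, for the cases $R=1$ and $R=kk'-1$, is exactly the paper's argument: add the $y$-clique inequality \eqref{eq:y-clq} and the $(y,z)$-clique inequality \eqref{eq:yz} on $C$, divide by $k'$, and verify that the result is the $z$-clique inequality \eqref{eq:z-clq}. The only (minor) difference is the case $R=0$, which the paper dispatches by citing Chopra \& Rao (the inequality is implied by other $z$-clique inequalities --- essentially your ``alternative'' route of summing over the sub-cliques of size $|C|-1$), whereas you observe that the same $y$-plus-$(y,z)$ combination also yields it directly; this is a small but genuine unification, and your arithmetic there ($t=Tk'$, $t'=Tk$, $r=r'=0$, both sides equal $kk'\binom{T}{2}$) checks out.
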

\begin{proof}
See the Appendix.
\end{proof}

Our experiments with the polyhedron transformation software package {\tt PORTA} \cite{christof1997porta}
lead us to make the following conjecture:
\begin{conjecture}
The following results hold for the convex hull of 2L-PP solutions:
\begin{itemize}
\item $y$-clique inequalities \eqref{eq:gen-clq} define facets if and only if $r \ne 0$.
\item $z$-clique inequalities \eqref{eq:z-clq} define facets if and only if $1 < R < kk' - 1$.
\item $(y,z)$-clique inequalities \eqref{eq:yz} define facets if and only if $r' \ne 0$.
\end{itemize}
\end{conjecture}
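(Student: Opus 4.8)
The plan is to split each biconditional in the conjecture into its two directions. The ``only if'' directions are essentially already in hand: every facet-defining inequality of a full-dimensional polytope is irredundant, hence non-dominated, and the two preceding lemmas together with Theorem~\ref{th:z-redundant} exhibit, whenever the relevant side condition fails, an explicit nonnegative combination of other valid clique inequalities equal to the inequality in question (modulo the assignment equations \eqref{eq:our-x}), so that it cannot define a facet. The content of the conjecture is therefore the three ``if'' directions, and for all of them I would use the standard machinery for clique-type facets: fix the ground graph $G$, let $P$ be the convex hull of the 2L-PP solutions in $(x,y,z)$-space, carry out (first step) the routine dimension count showing that the affine hull of $P$ is cut out exactly by \eqref{eq:our-x}, and then prove that the face defined by the inequality in question has dimension $\dim P - 1$.

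For the $y$-clique and $z$-clique inequalities this reduces to the facet result of Chopra \& Rao \cite{chopra1993partition} for the generalised clique inequality \eqref{eq:gen-clq} of the $k$-PP. On a clique $C$ the $y$-variables form the incidence vector of a $k$-partition of $C$ and the $z$-variables that of a $kk'$-partition, and \eqref{eq:y-clq} and \eqref{eq:z-clq} are exactly the corresponding instances of \eqref{eq:gen-clq}; hence the balanced colourings of $C$ that Chopra \& Rao use to span the $k$-PP clique facet lift to 2L-PP solutions lying on the corresponding face of $P$. One then completes an affinely independent family of such solutions by (i) recolouring the nodes of $V\setminus C$ so as to realise, independently, every value of every $x$-, $y$- and $z$-coordinate not supported on $C$, and uses (ii) a ``recolour one node and compare two tight solutions'' argument to show that any linear equation valid on the face has, modulo \eqref{eq:our-x}, no support outside the edges of $C$ and is there a scalar multiple of \eqref{eq:y-clq} (resp.\ \eqref{eq:z-clq}). (When $k=2$ the $y$-clique inequality is a max-cut-type inequality and needs the usual separate treatment, since the Chopra--Rao facet result requires $k\geq 3$.)

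The genuinely new and hardest case is the $(y,z)$-clique bullet. A colouring of $C$ has a two-layer structure: partitioning $C$ into residue classes $C_0,\dots,C_{k-1}$ modulo $k$ fixes $y(C)=\sum_{s}\binom{|C_s|}{2}$, while the further partition of each $C_s$ by actual colour fixes the $z$-contribution of that class, and \eqref{eq:yz} is the bound one obtains by minimising the latter class by class. I would first pin down exactly which colourings attain equality --- I expect the clean description ``within every residue class the colour multiplicities differ by at most one, and at most one residue class has size not divisible by $k'$, that class (if present) having size congruent to $r'$ modulo $k'$'' --- and check that this family is rich enough to span a facet precisely when $r'\neq 0$, collapsing to a dominated inequality when $r'=0$. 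Then I would run the indirect argument: let $\sum_{\{u,v\}}\lambda_{uv}y_{uv}+\sum_{\{u,v\}}\mu_{uv}z_{uv}+\sum_{v,c}\sigma_{vc}x_{vc}$ be constant over all tight solutions; recolourings of $V\setminus C$ kill every coefficient not supported on $C$, as before; the symmetry available inside $C$ --- permuting the $k'$ colours within a residue class, permuting the $k$ residue classes, and relabelling the nodes of $C$ --- forces $\lambda_{uv}\equiv-\theta$ and $\mu_{uv}\equiv k'\theta$ on the edges of $C$ for a single scalar $\theta$ (the ratio $-k'$ itself drops out once one notices that the tight solutions realise several values of the pair $(y(C),z(C))$ along the line $y(C)-k'z(C)=\text{const}$); and one tight solution together with one further point of the face pins $\theta>0$, so the face is a facet. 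The step I expect to be the main obstacle is this rigidity argument: recolouring a single node of $C$ generically changes a $y$-coordinate and a $z$-coordinate at once, and since a tight configuration with $r'\neq 0$ has at most one ``irregular'' residue class, moving a node between residue classes while remaining tight is awkward, so one has to chain together tight configurations with different distributions of the $t'$ complete blocks among the $k$ residue classes; verifying that enough such moves are available for \emph{every} clique with $|C|>k'$, rather than only for large $|C|$, is where the argument could break for small cliques --- which is presumably why the statement is still only a conjecture.
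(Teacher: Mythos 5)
First, be aware that the paper does not prove this statement: it is stated explicitly as a conjecture, supported only by computations with {\tt PORTA} on small instances, so there is no proof in the paper to compare yours against. What the paper does establish are essentially the three ``only if'' directions: the two lemmas and Theorem~\ref{th:z-redundant} show that when the stated side condition fails, the inequality is a nonnegative combination of other valid clique inequalities (for the $R=0$ case of \eqref{eq:z-clq}, by appeal to Chopra \& Rao), and since none of the dominating inequalities is equivalent to it modulo the assignment equations \eqref{eq:our-x}, it cannot be facet-defining. Your reading of the situation --- that the open content is the three ``if'' directions --- is exactly right, and your overall strategy (dimension count, tight affinely independent points, indirect method) is the standard and sensible one.

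As a proof, however, the proposal has genuine gaps, some of which you flag yourself. For the $y$- and $z$-clique bullets, reducing to Chopra \& Rao is not automatic: their facet theorem lives in the $(x,y)$-space of the $k$-PP with $k$ colours, whereas here the polytope sits in $(x,y,z)$-space with $kk'$ colours, so their tight points must be lifted and then extended to kill all the extra $x$- and $z$-directions; moreover their quoted result requires the partition parameter to be at least $3$, so the $k=2$ case of \eqref{eq:y-clq} is not covered by citation and needs its own argument, as you note. For the $(y,z)$-clique bullet, your description of the tight colourings and the symmetry reductions is plausible, but the decisive step --- showing that any equation $\sum\lambda_{uv}y_{uv}+\sum\mu_{uv}z_{uv}+\sum\sigma_{vc}x_{vc}=\text{const}$ satisfied by all tight solutions must be a multiple of \eqref{eq:yz} modulo \eqref{eq:our-x} --- is exactly what you leave unverified, and for cliques with $|C|$ only slightly larger than $k'$ the family of tight colourings is thin enough that the node-exchange moves you rely on may not exist. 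Until that rigidity argument is carried out for all admissible $C$ (or a small counterexample is found), what you have is a credible research programme rather than a proof, which is consistent with the authors leaving the statement as a conjecture.
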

In any case, we have found that all three families of inequalities work very well in practice as
cutting planes. Moreover, in our preliminary experiments, we found that the $y$-clique inequalities
were the most effective at improving the lower bound, with the $yz$-clique inequalities being the
second most effective.
\\*[3mm]
{\bf Remark:} The trivial inequality $y_e \geqslant z_e$ is also valid for all $e \in E$. In our preliminary
experiments, however, these inequalities proved to be of no value as cutting planes.

\subsection{Symmetry} \label{sub:th-sym}

Another issue to address is {\em symmetry}. Note that any permutation $\sigma$
on the set of colours $\{0,\ldots,kk'-1\}$ such that $\sigma(c) \bmod k = c \bmod k$
will preserve all $k$ and $kk'$ conflicts. Since there are $k'!$ such
permutations, for any colouring (which makes use of all available colours)
there are at least $k'!$ other colourings which yield the same cost.

One easy way to address this problem, at least partially, is given in
the following theorem:
\begin{theorem} \label{th:symmetry}
For any colour $c \in \{0, \ldots, kk'-1\}$, let $\phi(c)$ denote $\lfloor c/k \rfloor + (c \bmod k)$.
Then, one can fix to zero all variables $x_{vc}$ for which $\phi(c) \geqslant v$, while
preserving at least one optimal 2L-PP solution.
\end{theorem}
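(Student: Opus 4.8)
The overall strategy is: starting from any optimal colouring, relabel the colours so that the resulting colouring (a) has the same objective value and (b) assigns to each node $v$ a colour $c(v)$ with $\phi(c(v))\le v-1$. Such a colouring is then feasible for the IP with the stated variables fixed to zero — the only $x$-variable it sets to one is $x_{v,c(v)}$, and $\phi(c(v))\le v-1<v$ — and it is still optimal, which is exactly what the theorem asserts.

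Write each colour $c$ as the pair $(q,s)$ with $q=\lfloor c/k\rfloor\in\{0,\dots,k'-1\}$ and $s=c\bmod k\in\{0,\dots,k-1\}$, so that $\phi(c)=q+s$. I would use only relabellings $\sigma$ of the form $\sigma(q,s)=(\rho_s(q),\tau(s))$, where $\tau$ is a permutation of $\{0,\dots,k-1\}$ and, for each $s$, $\rho_s$ is a permutation of $\{0,\dots,k'-1\}$. Any such $\sigma$ maps a pair of equal colours to a pair of equal colours, and a pair of colours congruent modulo $k$ to a pair congruent modulo $k$ (the new residue $\tau(s)$ depends only on the old residue $s$); hence it leaves both sums in \eqref{eq:our-obj} unchanged. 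This extends the $k'!$ symmetries noted above: we are additionally permitted to permute the $k$ residue classes, which is essential, since already for $v=1$ the colouring must use the unique colour with $\phi=0$, namely $c=0$, and reaching it in general requires changing residues modulo $k$.

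Given an optimal colouring, I would build $\sigma$ greedily while scanning the nodes in the order $1,2,\dots,n$, maintaining an injective partial relabelling of the colours seen so far. At node $v$: if its old colour has occurred before, keep the new colour already assigned to it; otherwise let $s$ be its old residue, give residue $s$ the smallest residue label not yet used if $s$ is new, let $S$ be the new label now attached to $s$, and assign $v$ the new colour $(m,S)$, where $m$ is the number of distinct colours already placed in new residue class $S$. Extending the partial maps $\tau,\rho_s$ to full permutations arbitrarily yields an admissible $\sigma$, so the relabelled colouring $c'$ is again optimal.

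Finally I would verify $\phi(c'(v))\le v-1$ for every $v$. For a repeated colour this is inherited: the colour first occurred at some node $w<v$, where we had ensured $\phi\le w-1\le v-2$. For a colour $(m,S)$ freshly assigned at node $v$: the residue classes make their first appearances at nodes $a_1=1<a_2<\cdots$, so the $i$-th to appear does so at a node $a_i\ge i$ and receives label $i-1$, whence $S\le a_i-1$ where $a_i$ is the first appearance of $v$'s residue class; moreover the $m$ colours already present in that class first appeared at $m$ distinct nodes lying in $\{a_i,a_i+1,\dots,v-1\}$, so $v-1\ge a_i+m-1\ge S+m$, giving $\phi(c'(v))=m+S\le v-1$. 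The step needing the most care is precisely this last chain of inequalities — keeping straight that the label $i-1$ of the $i$-th residue class is simultaneously bounded by $a_i-1$, and that a class's $m$ earlier colours occupy $m$ distinct scan positions strictly before $v$ but not before $a_i$; the remaining points (that $\sigma$ is a well-defined bijection, that it preserves the objective, and the feasibility/optimality conclusion) are routine.
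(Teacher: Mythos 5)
Your proof is correct, and it reaches the conclusion by a genuinely different route from the paper's. The paper proceeds by induction on $n$: assuming the first $l$ nodes already satisfy $\phi(\mathcal{C}(v))<v$, it repairs node $l+1$ with one or two explicit colour transpositions, splitting into cases according to whether the residue class of $\mathcal{C}(l+1)$ modulo $k$ already occurs among nodes $1,\dots,l$. You instead construct a single canonical relabelling in one left-to-right sweep (residue classes renamed in order of first appearance, levels within each class likewise) and verify $\phi(c'(v))\le v-1$ by the counting argument $S\le a_i-1$, $m\le v-a_i$, both of which check out. Both arguments exploit the same symmetry group --- permutations of the form $(q,s)\mapsto(\rho_s(q),\tau(s))$ --- which, as you observe, is strictly larger than the $k'!$ symmetries described in Subsection~3.3, and the residue-permuting part is indeed indispensable (node $1$ must receive colour $0$). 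The main thing your global construction buys is that you never need to check that repairing node $v$ leaves nodes $1,\dots,v-1$ intact; that non-disturbance requirement is exactly where the paper's inductive step is delicate: in its second case it swaps $pk+r$ with $(s+1)k+r$, where $s$ is the level used by the \emph{highest-indexed} earlier node of residue $r$, and if some earlier node happens to carry colour $(s+1)k+r$ it is pushed onto $pk+r$ and violates its own bound (the step is mended by taking $s$ to be the \emph{largest level} used so far in class $r$, which makes $(s+1)k+r$ unused). Your version trades the paper's short local swaps for slightly longer bookkeeping, but it is self-contained and avoids that pitfall.
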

\begin{proof}
See the Appendix.
\end{proof}

\noindent
{\bf Example:} Suppose that $n \geqslant 4$ and $k = k' = 3$. Then $\phi(0), \ldots, \phi(8)$ are
$0$, $1$, $2$, $1$, $2$, $3$, $2$, $3$ and $4$, respectively. So we can fix the following variables to zero:
$x_{11}, \ldots, x_{18}$; $x_{22}$; $x_{24}, \ldots, x_{28}$; $x_{35}$, $x_{37}$, $x_{38}$ and
$x_{48}$.\hfill$\Box$

\section{Exact Algorithm} \label{se:algorithm}

We now describe an exact solution algorithm for the 2L-PP. The algorithm consists of two
main stages: {\em preprocessing}\/ and {\em cut-and-branch}. Preprocessing is described in
Subsection~\ref{sub:alg-preprocessing}, while the cut-and-branch algorithm is described in
Subsection~\ref{sub:alg-cut-and-branch}. Throughout this section, for a given set of nodes
$V' \subseteq V$, we let $G[V']$ denote the subgraph of $G$ induced by the nodes in $V'$.

\subsection{Preprocessing}
\label{sub:alg-preprocessing}

In the first stage, an attempt is made to simplify the input graph $G$
and, if possible, decompose it into smaller and simpler
subgraphs. This is via two operations, which we call {\em $k$-core
reduction}\/ and {\em block decomposition}. Although we focus on the
2L-PP the following results also apply to the $k$-PP.

A $k$-core of a graph $G$ is a maximal connected subgraph whose nodes all have degree
of at least $k$. The concept was first introduced in \cite{seidman1983network}, as a tool to
measure cohesion in social networks. An example is given in Fig.~\ref{fig:3core}, but it should
be borne in mind that, in general, a graph may have several (node-disjoint) $k$-cores. The
$k$-cores of a graph can be found easily, in ${\cal O}\big( n^2 \big)$ time, via a minor
adaptation of an algorithm given in \cite{szekeres1968inequality}. Details are given in
Algorithm~\ref{alg:k-core}.

\setlength{\unitlength}{1.15cm}
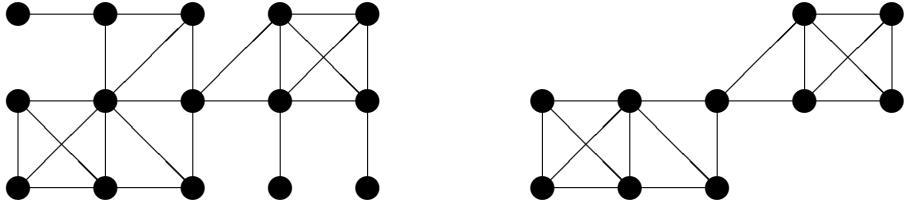
\begin{figure}
\centering
\begin{picture}(10, 3)
\put(0, 0.5){\circle*{0.27}}
\put(0, 1.5){\circle*{0.27}}
\put(0, 2.5){\circle*{0.27}}
\put(1, 0.5){\circle*{0.27}}
\put(1, 1.5){\circle*{0.27}}
\put(1, 2.5){\circle*{0.27}}
\put(2, 0.5){\circle*{0.27}}
\put(2, 1.5){\circle*{0.27}}
\put(2, 2.5){\circle*{0.27}}
\put(3, 0.5){\circle*{0.27}}
\put(3, 1.5){\circle*{0.27}}
\put(3, 2.5){\circle*{0.27}}
\put(4, 0.5){\circle*{0.27}}
\put(4, 1.5){\circle*{0.27}}
\put(4, 2.5){\circle*{0.27}}
\put(0, 0.5){\line(1,0){2}}
\put(0, 0.5){\line(0,1){1}}
\put(0, 0.5){\line(1,1){2}}
\put(0, 1.5){\line(1,0){4}}
\put(0, 1.5){\line(1,-1){1}}
\put(0, 2.5){\line(1,0){2}}
\put(1, 0.5){\line(0,1){2}}
\put(1, 1.5){\line(1,-1){1}}
\put(2, 0.5){\line(0,1){2}}
\put(2, 1.5){\line(1,1){1}}
\put(3, 1.5){\line(1,1){1}}
\put(3, 0.5){\line(0,1){2}}
\put(3, 2.5){\line(1,0){1}}
\put(3, 2.5){\line(1,-1){1}}
\put(4, 0.5){\line(0,1){2}}
\put(6, 0.5){\circle*{0.27}}
\put(6, 1.5){\circle*{0.27}}
\put(7, 0.5){\circle*{0.27}}
\put(7, 1.5){\circle*{0.27}}
\put(8, 0.5){\circle*{0.27}}
\put(8, 1.5){\circle*{0.27}}
\put(9, 1,5){\circle*{0.27}}
\put(9, 2.5){\circle*{0.27}}
\put(10, 1.5){\circle*{0.27}}
\put(10, 2.5){\circle*{0.27}}
\put(6, 0.5){\line(1,0){2}}
\put(6, 0.5){\line(0,1){1}}
\put(6, 0.5){\line(1,1){1}}
\put(6, 1.5){\line(1,0){4}}
\put(6, 1.5){\line(1,-1){1}}
\put(7, 0.5){\line(0,1){1}}
\put(7, 1.5){\line(1,-1){1}}
\put(8, 0.5){\line(0,1){1}}
\put(8, 1.5){\line(1,1){1}}
\put(9, 1.5){\line(0,1){1}}
\put(9, 1.5){\line(1,1){1}}
\put(9, 2.5){\line(1,0){1}}
\put(9, 2.5){\line(1,-1){1}}
\put(10, 1.5){\line(0,1){1}}
\end{picture}
\caption{A graph (left) and its (unique) $3$-core (right).}
\label{fig:3core}
\end{figure}

\begin{algorithm}
  \SetKwData{Left}{left}\SetKwData{This}{this}\SetKwData{Up}{up}
  \SetKwFunction{Union}{Union}\SetKwFunction{FindCompress}{FindCompress}
  \SetKwInOut{Input}{input}\SetKwInOut{Output}{output}
  \SetKwRepeat{Do}{do}{while}
  \Input{graph $G = (V,E)$}
  \Output{$k$-cores of $G$}
  $V' := V$\;
  \Do{$V^- \neq \emptyset$} {
   Let $V^-$ equal $\left\{ v\in V' :\ \degree[G\lbrack V'\rbrack ]{v} < k \right\}$\;
    \If{$V^- \ne \emptyset$}{
      $V':= V'\setminus V^-$\;
    }
  }
Output the connected components of $G[V']$\:
\caption{Algorithm for finding $k$-cores of a graph}
\label{alg:k-core}
\end{algorithm}

The reason that $k$-cores are of interest is given in the following proposition:
\begin{proposition}
For any graph $G$, the cost of the optimal 2L-PP solution is equal to the sum of the costs
of the optimal solutions of the 2L-PP instances given by its $k$-cores.
\end{proposition}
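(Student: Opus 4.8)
The plan is to show that Algorithm~\ref{alg:k-core} deletes only nodes that can be given a ``cost-free'' colour, so that removing them changes neither the combinatorial structure nor the optimal value. Let $V'\subseteq V$ be the node set that remains when Algorithm~\ref{alg:k-core} terminates, so that the $k$-cores $H_1,\dots,H_p$ are exactly the connected components of $G[V']$, and let $v_1,\dots,v_q$ be the nodes of $V\setminus V'$ listed in an order consistent with their deletion; within a single iteration the members of $V^-$ may be ordered arbitrarily, since deleting nodes only lowers degrees. Hence, at the instant $v_j$ is deleted it has at most $k-1$ neighbours in $G[\{v_j,v_{j+1},\dots,v_q\}\cup V']$. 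Writing $\mathrm{opt}(\cdot)$ for the optimal 2L-PP cost, and using that $G[V']$ has no edges between distinct components and that the objective is additive over edges, we get $\mathrm{opt}(G[V'])=\sum_{i=1}^{p}\mathrm{opt}(H_i)$; so it suffices to prove $\mathrm{opt}(G)=\mathrm{opt}(G[V'])$.

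For $\mathrm{opt}(G[V'])\le\mathrm{opt}(G)$, I would take an optimal colouring of $G$ and restrict it to $V'$: since the edge set of $G[V']$ is contained in that of $G$ and, as $w\ge w'>0$, every per-edge interference term is non-negative, the restricted colouring costs no more on $G[V']$ than the original did on $G$. For the reverse inequality, I would start from an optimal colouring of $G[V']$ and re-insert $v_q,v_{q-1},\dots,v_1$ in reverse deletion order, colouring each $v_j$ at the moment it is re-inserted. The point is that its already-coloured neighbours are then precisely its neighbours in $G[\{v_j,\dots,v_q\}\cup V']$, of which there are at most $k-1$; hence they occupy at most $k-1$ of the $k$ residue classes modulo $k$, leaving a free residue class $c^{*}\in\{0,\dots,k-1\}$. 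Colouring $v_j$ with $c^{*}$ makes $v_j$ share neither a colour nor a colour modulo $k$ with any already-coloured neighbour, so every edge joining $v_j$ to a coloured node contributes $0$; the edges from $v_j$ to $v_1,\dots,v_{j-1}$ are dealt with when those nodes are re-inserted. Doing this for every re-inserted node produces a colouring of $G$ of the same cost as the chosen optimal colouring of $G[V']$, whence $\mathrm{opt}(G)\le\mathrm{opt}(G[V'])$. Combining the two bounds gives $\mathrm{opt}(G)=\mathrm{opt}(G[V'])=\sum_{i=1}^{p}\mathrm{opt}(H_i)$.

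I expect the re-insertion argument to be the only delicate step, and it rests on two facts: the $k$-core algorithm deletes a node only when it has fewer than $k$ current neighbours, which is exactly what guarantees a free residue class modulo $k$; and a colour drawn from a free residue class simultaneously avoids the ``modulo $k$'' interference (weight $w$) and the ``same colour'' interference (weight $w'$), because equal colours are in particular equal modulo $k$. The separability over connected components and the restriction inequality are routine, and edge cases (for instance $V'=\emptyset$, where both sides equal $0$) are covered automatically.
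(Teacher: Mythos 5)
Your proposal is correct and follows essentially the same route as the paper's own proof sketch: delete nodes of degree less than $k$ and, when extending an optimal colouring back to them, exploit the fact that at most $k-1$ coloured neighbours leave a free residue class modulo $k$, so the re-inserted node incurs neither kind of interference. You merely make explicit the deletion/re-insertion order, the restriction inequality, and the additivity over connected components, all of which the paper leaves to induction.
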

\begin{proofsketch}
Let $v \in V$ be any node whose degree in $G$ is less than $k$. Suppose we solve the 2L-PP
on the induced subgraph $G[V \setminus \{v\}]$. Then we can extend the 2L-PP solution to the
original graph $G$, without increasing its cost, by giving node $v$ a colour that is not congruent
modulo $k$ to the colour of any of its neighbours. The result follows by induction.
\end{proofsketch}

We refer to the process of the replacement of $G$ with its $k$-core(s) as
\emph{$k$-core reduction}. We will say that a graph is {\em $k$-core reducible}\/
if it is not equal to the union of its $k$-cores.

Now, a vertex of a graph is said to be an \emph{articulation point}\/ if its removal causes the graph to
become disconnected. A connected graph with no articulation points is said to be \emph{biconnected}.
The \emph{biconnected components}\/ of a graph, also called \emph{blocks}, are maximal induced
biconnected subgraphs. For example, the graph on the right of Fig.~\ref{fig:3core} has two blocks,
which are displayed on the left of Fig.~\ref{fig:blocks}. The blocks of a graph
$G=(V,E)$ can be computed in ${\cal O}(|V|+|E|)$ time \cite{hopcroft1973algorithm}.

\setlength{\unitlength}{1.15cm}
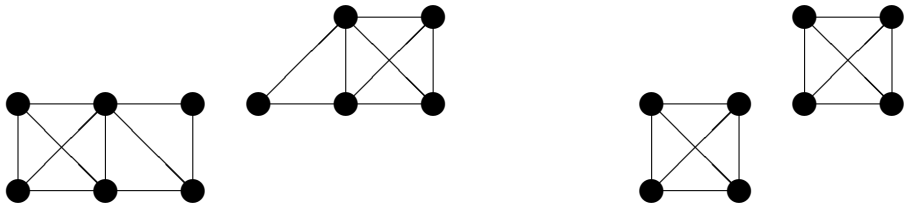
\begin{figure}
\centering
\begin{picture}(10, 3)
\put(0, 0.5){\circle*{0.27}}
\put(0, 1.5){\circle*{0.27}}
\put(1, 0.5){\circle*{0.27}}
\put(1, 1.5){\circle*{0.27}}
\put(2, 0.5){\circle*{0.27}}
\put(2, 1.5){\circle*{0.27}}
\put(0, 0.5){\line(1,0){2}}
\put(0, 0.5){\line(0,1){1}}
\put(0, 0.5){\line(1,1){1}}
\put(0, 1.5){\line(1,0){2}}
\put(0, 1.5){\line(1,-1){1}}
\put(1, 0.5){\line(0,1){1}}
\put(1, 1.5){\line(1,-1){1}}
\put(2, 0.5){\line(0,1){1}}
\put(2.75, 1.5){\circle*{0.27}}
\put(3.75, 1.5){\circle*{0.27}}
\put(3.75, 2.5){\circle*{0.27}}
\put(4.75, 1.5){\circle*{0.27}}
\put(4.75, 2.5){\circle*{0.27}}
\put(2.75, 1.5){\line(1,0){2}}
\put(2.75, 1.5){\line(1,1){1}}
\put(3.75, 1.5){\line(0,1){1}}
\put(3.75, 1.5){\line(1,1){1}}
\put(3.75, 2.5){\line(1,0){1}}
\put(3.75, 2.5){\line(1,-1){1}}
\put(4.75, 1.5){\line(0,1){1}}
\put(7.25, 0.5){\circle*{0.27}}
\put(7.25, 1.5){\circle*{0.27}}
\put(8.25, 0.5){\circle*{0.27}}
\put(8.25, 1.5){\circle*{0.27}}
\put(7.25, 0.5){\line(1,0){1}}
\put(7.25, 0.5){\line(0,1){1}}
\put(7.25, 0.5){\line(1,1){1}}
\put(7.25, 1.5){\line(1,0){1}}
\put(7.25, 1.5){\line(1,-1){1}}
\put(8.25, 0.5){\line(0,1){1}}
\put(9, 1.5){\circle*{0.27}}
\put(9, 2.5){\circle*{0.27}}
\put(10, 1.5){\circle*{0.27}}
\put(10, 2.5){\circle*{0.27}}
\put(9, 1.5){\line(1,0){1}}
\put(9, 1.5){\line(0,1){1}}
\put(9, 1.5){\line(1,1){1}}
\put(9, 2.5){\line(1,0){1}}
\put(9, 2.5){\line(1,-1){1}}
\put(10, 1.5){\line(0,1){1}}
\end{picture}
\caption{Blocks of the $3$-core (left) and $3$-cores of the blocks (right).}
\label{fig:blocks}
\end{figure}

It has been noted that many optimisation problems on graphs can be simplified by working on the blocks
of the graph instead of the original graph; see, e.g., \cite{hochbaum1993should}. The following proposition
shows that this is also the case for the 2L-PP.
\begin{proposition}
For any graph $G$, the cost of the optimal 2L-PP solution is equal to the sum of the costs
of the optimal solutions of the 2L-PP instances given by its blocks.
\end{proposition}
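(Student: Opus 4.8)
The plan is to establish the two inequalities $\mathrm{OPT}(G) \ge \sum_{B} \mathrm{OPT}(B)$ and $\mathrm{OPT}(G) \le \sum_{B} \mathrm{OPT}(B)$, where $B$ ranges over the blocks of $G$ and $\mathrm{OPT}(\cdot)$ denotes the optimal 2L-PP cost of a graph. I would first assume that $G$ is connected; the disconnected case follows by applying the result to each connected component, since both the optimal cost and the collection of blocks decompose over components.

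The easy direction rests on two classical facts about blocks: the edge sets of the blocks of $G$ partition $E$, and every block $B$ is an induced subgraph, i.e.\ $B = G[V(B)]$ (if $u,v \in V(B)$, then a $u$--$v$ path inside $B$ together with the edge $\{u,v\}$, should it exist, forms a cycle, so $\{u,v\}$ lies in $B$). Hence, for \emph{any} colouring $\chi$ of $V$, each $y$- and $z$-contribution to the objective~\eqref{eq:our-obj} is charged to the unique block containing the corresponding edge, so the cost of $\chi$ on $G$ equals the sum over the blocks $B$ of the cost of the restriction $\chi|_{V(B)}$ on $B$. Applying this to an optimal colouring of $G$ gives $\mathrm{OPT}(G) \ge \sum_{B} \mathrm{OPT}(B)$.

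For the reverse inequality I would argue by induction on the number of blocks, the base case of a single block being immediate. For the inductive step, choose a block $B$ that is a leaf of the block--cut tree of $G$; such a $B$ contains exactly one articulation point $a$ of $G$, and every neighbour of a vertex in $V(B)\setminus\{a\}$ lies in $V(B)$. Put $G' := G\big[(V \setminus V(B)) \cup \{a\}\big]$; then $G'$ is connected, its blocks are exactly the blocks of $G$ other than $B$, and $E$ is the disjoint union of $E(B)$ and $E(G')$. By the induction hypothesis, $\mathrm{OPT}(G') = \sum_{B' \ne B}\mathrm{OPT}(B')$. Now fix an optimal colouring $\chi'$ of $G'$ and an optimal colouring $\chi_B$ of $B$. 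The crucial point is that applying to $\chi_B$ any permutation $\sigma$ of the colour set $\{0,\dots,kk'-1\}$ that maps each residue class modulo $k$ bijectively onto a residue class modulo $k$ changes neither the number of monochromatic edges nor the number of edges that are monochromatic modulo $k$, hence keeps $\chi_B$ optimal for $B$; and one can choose such a $\sigma$ with $\sigma(\chi_B(a)) = \chi'(a)$ (pick any permutation of $\{0,\dots,k-1\}$ sending $\chi_B(a) \bmod k$ to $\chi'(a) \bmod k$, and lift it to a bijection of the classes that sends $\chi_B(a)$ to $\chi'(a)$). Then $\chi'$ and $\sigma\circ\chi_B$ agree on $V(G') \cap V(B) = \{a\}$, so they paste together into a single colouring $\chi$ of $V$; since $E$ is the disjoint union of $E(B)$ and $E(G')$, the cost of $\chi$ on $G$ equals the cost of $\sigma\circ\chi_B$ on $B$ plus the cost of $\chi'$ on $G'$, namely $\mathrm{OPT}(B) + \mathrm{OPT}(G') = \sum_B \mathrm{OPT}(B)$. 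This yields $\mathrm{OPT}(G) \le \sum_B \mathrm{OPT}(B)$ and completes the proof.

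The block--cut tree bookkeeping (existence of a leaf block, identification of the blocks of $G'$, the edge partition, connectivity of $G'$) is routine. The only substantive step — and the only one going beyond the $k$-core proposition — is the recolouring: a generic permutation of the $kk'$ colours does \emph{not} preserve the $y$-objective, so the argument must be confined to permutations that respect the partition of the colours into residue classes modulo $k$, and one must verify that this subgroup still acts transitively enough to reconcile the colour assigned to $a$ in the two subproblems. I expect this to be the main point to get right.
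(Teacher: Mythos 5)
Your proof is correct and follows essentially the same route as the paper's: an induction that splits $G$ at an articulation point, recolours one piece with an objective-preserving colour permutation so that the two optimal solutions agree at that vertex, and pastes them together using the fact that the blocks partition the edge set. The only real difference is presentational --- the paper splits into all branches at a single articulation point at once and dismisses the recolouring step with ``by symmetry'', whereas you peel off one leaf block of the block--cut tree at a time and spell out exactly which permutations (those respecting the residue classes modulo $k$) preserve the objective, which is the detail the paper leaves implicit.
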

\begin{proofsketch}
If $G$ is disconnected, then the 2L-PP trivially decomposes into one 2L-PP instance for each
connected component.  So assume that $G$ is connected but not biconnected. Let $v \in V$ be an
articulation point, and let $V_1, \ldots, V_t$ be the vertex sets of the connected components of
$G[V \setminus \{v\}]$. Now consider the subgraphs $G[V_i \cup \{v\}]$ for $i = 1, \ldots, t$.
Let $S(i)$ be the optimal solution to the 2L-PP instance on $G[V_i \cup \{v\}]$, represented as a
proper $kk'$-colouring of $V_i \cup \{v\}$, and let $c(i)$ be its cost. Since each edge of $E$ appears in
exactly one of the given subgraphs, the quantity $\sum_{i=1}^t c(i)$ is a lower bound on the cost of the
optimal 2L-PP solution on $G$. Moreover, by symmetry, we can assume that node $v$ receives
colour $1$ in $S(1), \ldots, S(t)$. Now, for a given $u \in V \setminus \{v\}$, let $i(u) \in \{1, \ldots, t\}$
be the unique integer such that $u \in V_{i(u)}$. We can now construct a feasible solution to the 2L-PP
on $G$ by giving node $v$ colour $1$, and giving each other node $u \in V \setminus \{v\}$ the colour that
it has in $S(i(u))$. The resulting 2L-PP solution has cost equal to $\sum_{i=1}^t c(i)$, and is therefore
optimal.
\end{proofsketch}

We call the replacement of a graph with its blocks {\em block decomposition}. Interestingly, a graph
which is not $k$-core reducible may have blocks which are; see again Fig.~\ref{fig:blocks}.  This
leads us to apply $k$-core reduction and block decomposition recursively, until no more reduction
or decomposition is possible. 

At the end of this procedure, we have a collection of induced subgraphs of $G$ which are
biconnected and not $k$-core reducible, and we can solve the 2L-PP on each subgraph
independently. Given the optimal solutions for each subgraph, we can reconstruct an optimal
solution for the original graph by recursively constructing solutions for the predecessor graph
of a reduction or decomposition.

\subsection{Cut-and-branch algorithm}
\label{sub:alg-cut-and-branch}

For each remaining subgraph, we now run our cut-and-branch algorithm. Let $G'=(V',E')$
be the given subgraph. We set up an initial trivial LP, with only one variable $y_e$ for each
edge $e \in E'$, and run a cutting-plane algorithm based on $y$-clique inequalities. Next, we
add the $z$ variables and run another cutting-plane algorithm based on $z$-clique and $yz$-clique
inequalities. Finally, we add the $x$-variables and run branch-and-bound. The full procedure is
detailed in Algorithm~\ref{alg:cut-and-branch}.

\begin{algorithm}
\caption{Cut-and-branch algorithm to solve 2L-PP}
\SetKwInOut{Input}{input}
\SetKwRepeat{Do}{do}{while}
\SetKwInOut{Output}{output}
\Input{subgraph $G'$, 2L-PP problem parameters $k$, $k'$,  continuous
parameter $\epsilon > 0$, integer parameter $t > 0$ }
Enumerate all cliques in $G'$ of size greater than $k$\;
Construct the (trivial) LP relaxation $\min \sum_{e \in E'} y_e$ s.t.\ $y$ non-negative\;
\Do{Violated inequalities found}{
  Solve LP relaxation\;
  Search for violated $y$-clique inequalities \eqref{eq:y-clq}\;
  If any are found, add the $t$ most violated ones to the LP\;
}
Delete all $y$-clique inequalities with slack greater than $\epsilon$\;
Change objective function to (\ref{eq:our-obj}) and add one non-negative variable $z_e$ for all $e \in E'$\;
\Do{Violated inequalities found}{
  Solve LP relaxation\;
  Search for violated $z$-clique inequalities \eqref{eq:z-clq} and $yz$-clique
 inequalities \eqref{eq:yz}\;
  If any are found, add the $t$ most violated ones to the LP\;
}
Delete all inequalities with slack greater than $\epsilon$\;
Add $x$ variables and constraints (\ref{eq:our-x})--(\ref{eq:our-bin-z}), and apply symmetry-breaking\;
Solve resulting 0-1 LP with branch-and-bound\;
\label{alg:cut-and-branch}
\end{algorithm}

The key feature of this approach is that the LP is kept as small as possible throughout the
course of the algorithm. Indeed, (a) the $z$ and $x$ variables are added to the problem only
when they are needed, (b) only a limited number of constraints are added in each cutting-plane
iteration, and (c) slack constraints are deleted after each of the two cutting-plane algorithms
has terminated. The net result is that both cutting-plane algorithms run very efficiently, and so
does the branch-and-bound algorithm at the end.

We now make some remarks about the separation problems for the three kinds of clique
inequalities. Since all three separation problems seem likely to be $\NP$-hard, we initially planned
to use greedy separation heuristics, in which the set $C$ is enlarged one node at a time. We were
surprised to find, however, that it was feasible to solve the separation problems exactly, by brute-force
enumeration, for typical 2L-PP instances encountered in our application. The reason is that the original
graph $G$ tends to be fairly sparse in practice, and each subgraph $G'$ generated by our preprocessor
tends to be fairly small. Accordingly, after the preprocessing stage, we use the Bron-Kerbosch algorithm \cite{Bron1973} to enumerate all maximal cliques in each subgraph $G'$. It is then fairly easy to
solve the separation problems by enumeration, provided that one takes care not to examine the same
clique twice in a given separation call. We omit details, for brevity.

We remark that, although an arbitrary graph with $n$ nodes can have as many as $3^{\frac{n}{3}}$
maximal cliques \cite{moon1965}, a graph in which all nodes have degree at most $d$ can have at most
$(n-d)3^{\frac{d}{3}}$ of them \cite{eppstein2010}.

\section{Computational Experiments} \label{se:experiments}

We now present the results of some computational experiments. In
Subsection~\ref{sub:exp-graphs}, we describe how we constructed the graphs used in our
experiments. In Subsection~\ref{sub:exp-pre}, we test the preprocessing algorithm
for different values of $k$. In Subsection~\ref{sub:exp-cuts}, we present results from the
cutting-plane algorithms. Finally, in Subsection~\ref{sub:exp-overall}, we study the performance
of the algorithm as a whole.

Throughout these experiments, the value of $k$ varies between 2 and 5, while for simplicity,
we fix $k'=2$, since the value of $k'$ does not affect the performance of the graph
preprocessing algorithm. All experiments have been run on a high performance computer
with an Intel 2.6 GHz processor and using 16 cores. Graph preprocessing and clique
enumeration was done using {\tt igraph} \cite{csardinepusz2006} and the linear and integer
programs were solved using {\tt Gurobi} v.6.5 \cite{gurobi}.

\subsection{Graph construction} \label{sub:exp-graphs}
The strength of a signal at a receiver decays in free space at a rate
inversely proportional to the square of the distance from the
transmitter, but in real systems often at a faster rate due to the
presence of objects blocking or scattering the waves.  Therefore,
beyond a certain distance, two transceivers can no longer hear each
other, and therefore there cannot be a direct conflict between their
IDs.  In our application, however, a conflict also occurs if a pair of
devices have a neighbour in common. (Essentially, this is because each
device needs to be able to tell its neighbours apart.)

Accordingly, we initially constructed our graphs as follows. We first sample a specified number
of points uniformly on the unit square. Edges are created between pairs of points if they
are within a specified radius of each other. (This yields a so-called {\em disk graph};
see, e.g., \cite{lu2008survey}.) The graph is then augmented with edges between
pairs of nodes which have a neighbour in common. (In other words, we take the {\em square}\/
of the disk graph.) This construction is illustrated in Figure~\ref{fig:nbr-graph}.

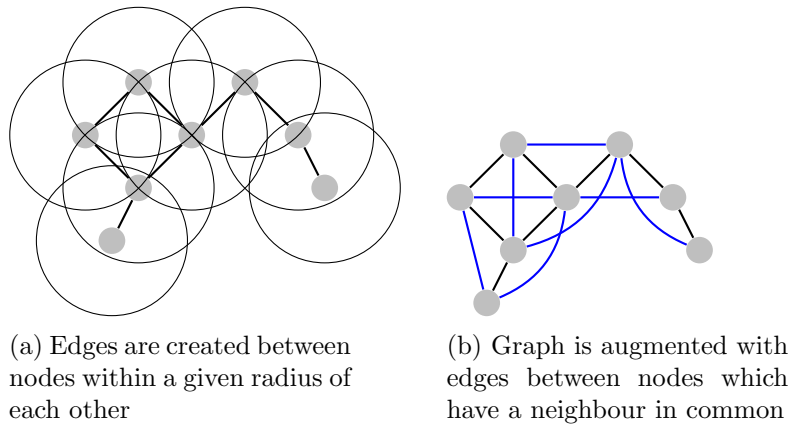
\begin{figure}
  \centering
  \begin{subfigure}[b]{0.3\textwidth}
    \begin{tikzpicture}[scale=0.7, auto,swap]
  \foreach \pos/\name in {{(0.5,0)/a}, {(0,2)/b}, {(1,1)/c},
                          {(1,3)/d}, {(2,2)/e}, {(3,3)/f}, 
                          {(4,2)/g}, {(4.5,1)/h}}
        \node[vertex] (\name) at \pos {};

  \foreach \name in {a,b,c,d,e,f,g,h}
     \draw (\name) circle (1.42);

  \foreach \source/ \dest in {a/c, b/c, b/d, c/e, d/e,
                              e/f, f/g, g/h}
        \path[edge] (\source) -- (\dest);
\end{tikzpicture}

    \caption{Edges are created between nodes within a given radius of each other}
  \end{subfigure}
  \hspace{1cm}
  \begin{subfigure}[b]{0.3\textwidth}
    \begin{tikzpicture}[scale=0.7, auto,swap]
  \foreach \pos/\name in {{(0.5,0)/a}, {(0,2)/b}, {(1,1)/c},
                          {(1,3)/d}, {(2,2)/e}, {(3,3)/f}, 
                          {(4,2)/g}, {(4.5,1)/h}}
        \node[vertex] (\name) at \pos {};

  \foreach \source/ \dest in {a/c, b/c, b/d, c/e, d/e,
                              e/f, f/g, g/h}
        \path[edge] (\source) -- (\dest);

  \path[edge, color=blue] (a) -- (b);
  \path[edge, color=blue] (b) -- (e);
  \path[edge, color=blue] (c) -- (d);
  \path[edge, color=blue] (d) -- (f);
  \path[edge, color=blue] (e) -- (g);
  \draw[thick, color=blue] (a) to [bend right=30] (e);
  \draw[thick, color=blue] (c) to [bend right=30] (f);
  \draw[thick, color=blue] (f) to [bend right=30] (h);
\end{tikzpicture}

    \caption{Graph is augmented with edges between nodes which have a neighbour in common}
  \end{subfigure}
  \caption{Construction of neighbourhood graph}
  \label{fig:nbr-graph}
\end{figure}

It turned out, however, that neighbourhood graphs constructed in this way yielded extremely
easy 2L-PP instances.
The reason is that nodes near to the boundary of the square tend to have small degree, which
causes them to be removed during the preprocessing stage. This in turn causes their neighbours
to have small degree, and so on. In order to create more challenging instances, and to avoid
this ``boundary effect", we decided to use a torus topology to calculate distances between points
in the unit square before constructing the graphs.

\subsection{Preprocessing} \label{sub:exp-pre}

In order to understand the potential benefits of the preprocessing stage, we have
calculated the effect of preprocessing on our random neighbourhood graphs for different values 
of $k$ and disk radius, while fixing $n=100$. In particular, for radius $0.01, \ldots, 0.2$
and $k = 3, 4, 5$, we calculate the mean proportion of edges eliminated and the mean proportion
of vertices in the largest remaining component. The results are shown in Figure~\ref{fig:preprocess}.
The means were estimated by simulating 1000 random graphs for each pair of parameters.

\begin{figure}[h]
  \centering
  \includegraphics[width=0.75\textwidth]{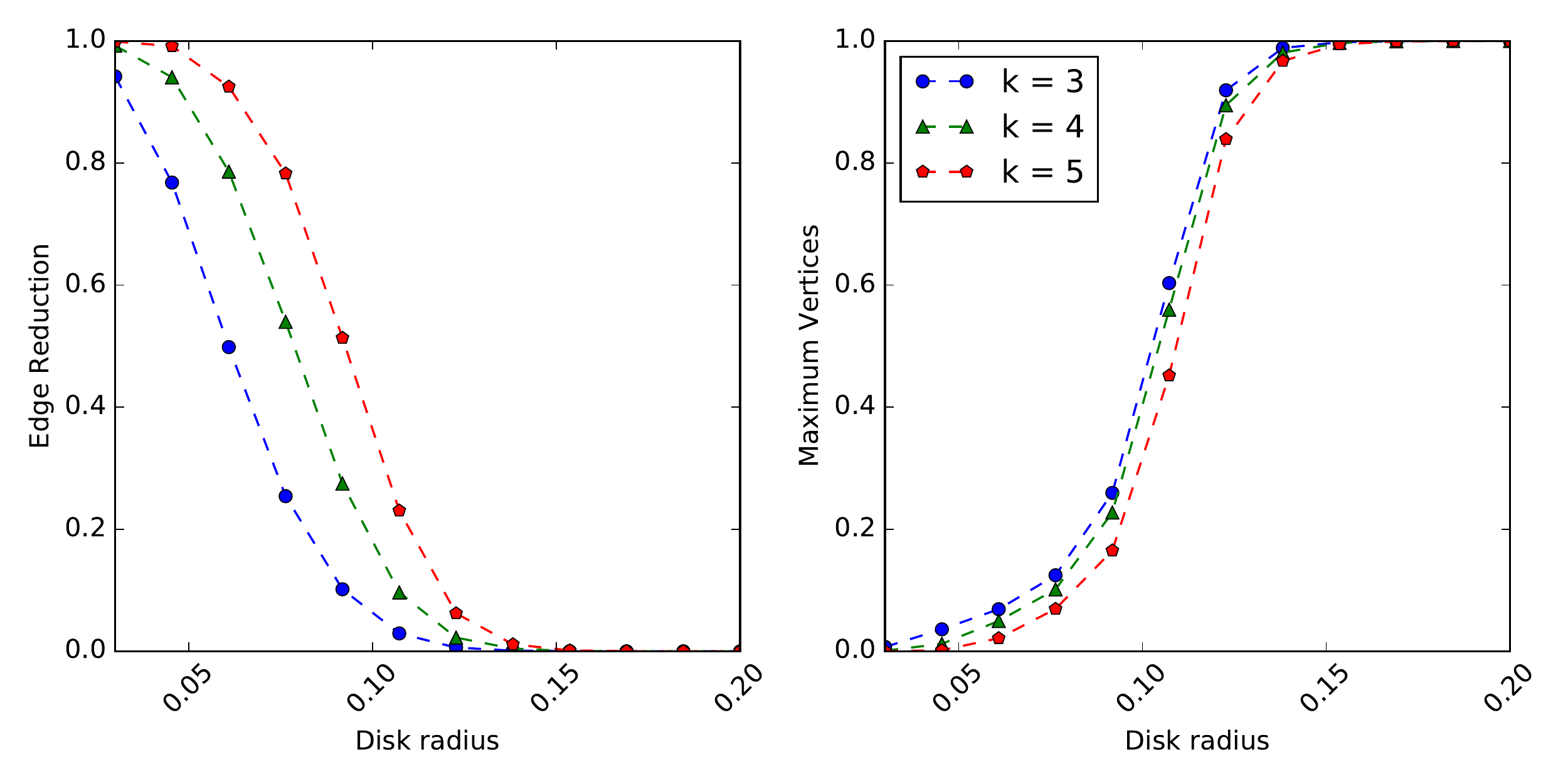}  
  \caption{Preprocessing of random neighbourhood graphs}
  \label{fig:preprocess}
\end{figure}

The results show that, for all values of $k$, the proportion of edges eliminated decays to zero
rather quickly as the disk radius is increased. The proportion of nodes in the largest component
also tends to one as the radius is increased, but at a slower rate than the convergence for edge
reduction.

In order to gain further insight, we also explored how the average degree in our random
neighbourhood graphs depends on the number of nodes and the disk radius. The results are
shown in Figure~\ref{fig:avg-deg}. A comparison of this figure and the preceeding one indicates
that, as one might expect, preprocessing works best when the average degree is not much
larger than $k$.

\begin{figure}[h]
  \centering
  \includegraphics[width=4in]{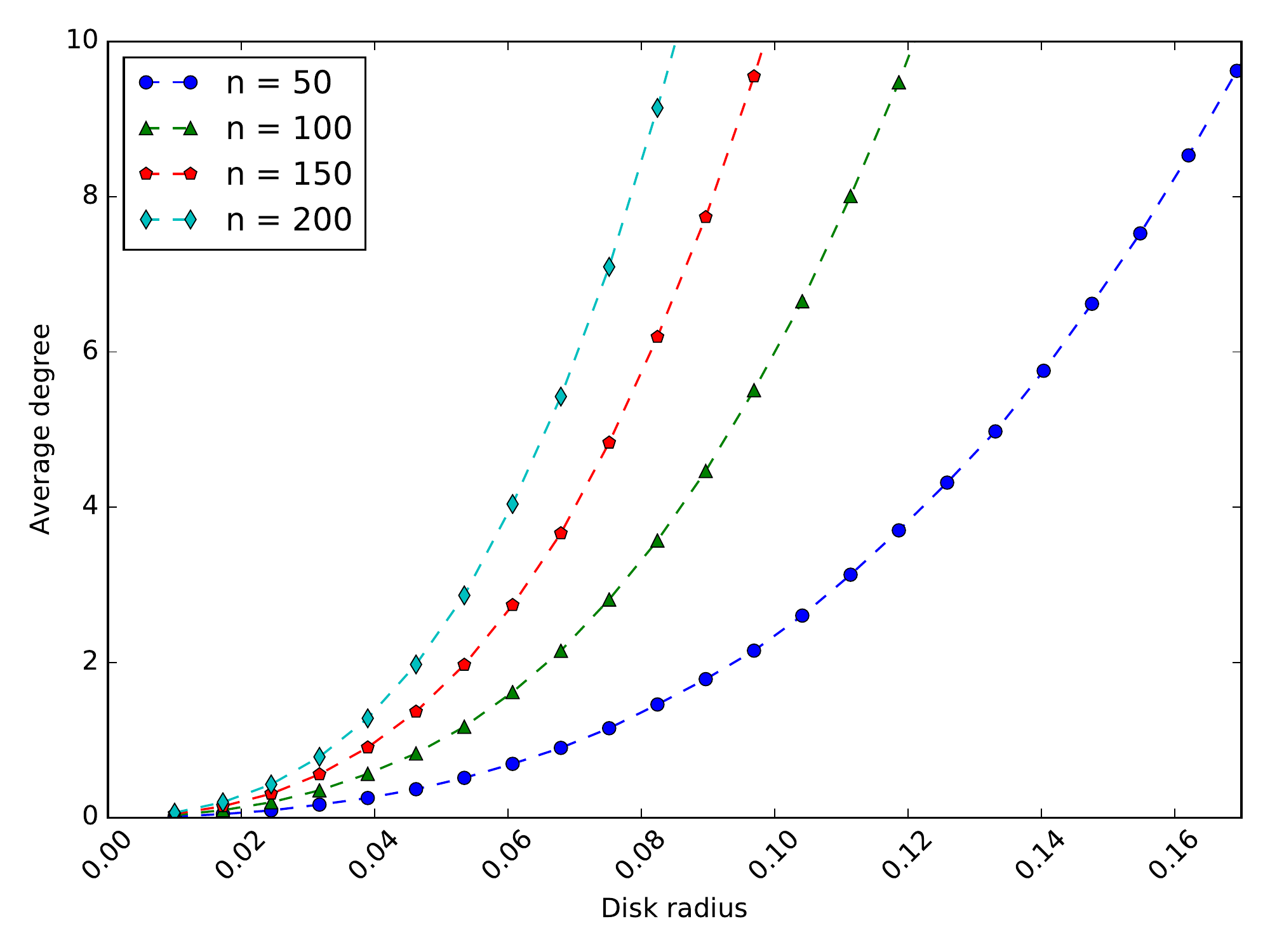}
  \caption{Mean average degree of random neighbourhood graphs}
  \label{fig:avg-deg}
\end{figure}

\subsection{Cutting planes} \label{sub:exp-cuts}

Next, we present some results obtained with the cutting-plane algorithms. For $n=100$, and
for disk radii ranging from $0.08$ to $0.14$, we constructed 50 random neighbourhood graphs.
The same sets of points were used to construct the graphs for each disk radius. We also
considered three different values of $k$, namely $2,3$ and $4$. The weights $w, w'$ were both
set to $1$ for simplicity. For each of the resulting 1050 2L-PP instances,
we ran the preprocessing algorithm, followed by the cut-and-branch algorithm.

As mentioned in Subsection \ref{sub:th-valid}, the $yz$-clique inequalities (\ref{eq:yz}) turned
out to be more effective at improving the lower bound than the $z$-clique inequalities
(\ref{eq:z-clq}). In order to make this clear to the reader, we have divided the second
cutting-plane phase into two sub-phases, whereby $yz$-clique inequalities are added in the
first subphase and $z$-clique inequalities are added in the second subphase.

In Table~\ref{tab:opt-gap}, we present the average gap between the lower bound and optimum,
expressed as a proportion of the optimum, after each of the three kinds of cuts have been
added. We see that, for each value of $k$, the gap increases as we enlarge the disk
radius. Nevertheless, the gap always remains below 5\% for the radii considered. Note that
the $z$-clique inequalities help only when $k=2$. This may be because $z$-clique inequalities
are defined only for cliques containing more than $kk'$ nodes, and not many such cliques
are present when $k > 2$.

In Table~\ref{tab:num-cuts} we present the average number of cuts added during each
cutting-plane phase. We see that the number of cuts increases with the disk radius. This is
to be expected, since, when the radius is large, there are more large cliques present.
On the other hand, the number of cuts decreases as the value of $k$ increases. The
explanation for this is that $y$-clique inequalities are defined only for cliques containing more
than $k$ nodes, and the number of such cliques decreases as $k$ increases.

\begin{landscape}
\newgeometry{left=2cm,bottom=2.0cm, right=2cm, top=5cm}
\begin{table}[h]
  \centering
  \begin{tabular}{l|ccc|ccc|ccc|}
\toprule
& \multicolumn{3}{c}{k = 2} & \multicolumn{3}{c}{k = 3} & \multicolumn{3}{c}{k = 4} \\
\midrule
Radius& y-cut gap& yz-cut gap& z-cut gap& y-cut gap& yz-cut gap& z-cut gap& y-cut gap& yz-cut gap& z-cut gap\\
\midrule
0.08& 0.04& 0.01& 0.01& 0.01& 0.01& 0.01& 0.01& 0.01& 0.01 \\
0.09& 0.05& 0.01& 0.01& 0.01& 0.01& 0.01& 0.01& 0.01& 0.01 \\
0.10& 0.07& 0.02& 0.01& 0.02& 0.01& 0.01& 0.00& 0.00& 0.00 \\
0.11& 0.09& 0.02& 0.01& 0.04& 0.03& 0.03& 0.05& 0.05& 0.05 \\
0.12& 0.11& 0.02& 0.01& 0.05& 0.03& 0.03& 0.02& 0.02& 0.02 \\
0.13& 0.14& 0.03& 0.03& 0.07& 0.04& 0.04& 0.03& 0.03& 0.03 \\
0.14& 0.16& 0.04& 0.03& 0.08& 0.04& 0.04& 0.04& 0.04& 0.04 \\
\bottomrule
\end{tabular}

  \caption{Average optimality gap at the end of each cutting-plane phase}
  \label{tab:opt-gap}
\end{table}
\vspace{-0.6cm}
\begin{table}[h]
  \centering
  \begin{tabular}{l|ccc|ccc|ccc|}
\toprule
& \multicolumn{3}{c}{k = 2} & \multicolumn{3}{c}{k = 3} & \multicolumn{3}{c}{k = 4} \\
\midrule
Radius& \# y-cuts& \# yz-cuts& \# z-cuts& \# y-cuts& \# yz-cuts& \# z-cuts& \# y-cuts& \# yz-cuts& \# z-cuts\\
\midrule
0.08& 45.6& 1.2& 0.4& 9.3& 0.0& 0.0& 1.2& 0.0& 0.0 \\
0.09& 72.5& 3.1& 1.1& 18.4& 0.0& 0.0& 3.0& 0.0& 0.0 \\
0.10& 110.0& 8.7& 3.0& 33.0& 0.1& 0.0& 8.0& 0.0& 0.0 \\
0.11& 155.0& 17.7& 7.4& 58.1& 0.7& 0.2& 16.1& 0.0& 0.0 \\
0.12& 198.8& 30.9& 16.1& 85.2& 2.4& 0.8& 27.7& 0.0& 0.0 \\
0.13& 269.9& 52.9& 24.7& 116.0& 3.9& 1.2& 39.5& 0.0& 0.0 \\
0.14& 376.4& 87.7& 36.1& 173.5& 8.6& 3.0& 62.9& 0.1& 0.0 \\
\bottomrule
\end{tabular}

  \caption{Average number of cuts added to linear relaxation during each cutting-plane phase}
  \label{tab:num-cuts}
\end{table}
\vspace{-0.6cm}
\begin{table}[h]
  \centering
  \begin{tabular}{l|cc|cc|cc|}
\toprule
& \multicolumn{2}{c}{k = 2} & \multicolumn{2}{c}{k = 3} & \multicolumn{2}{c}{k = 4} \\
\midrule
Radius& CP time (s)& BB time (s)& CP time (s)& BB time (s)& CP time (s)& BB time (s)\\
\midrule
0.08& 0.01& 0.03& 0.00& 0.01& 0.00& 0.00 \\
0.09& 0.02& 0.06& 0.00& 0.04& 0.00& 0.01 \\
0.10& 0.02& 0.13& 0.01& 0.06& 0.00& 0.01 \\
0.11& 0.02& 0.26& 0.01& 0.21& 0.00& 0.05 \\
0.12& 0.03& 1.40& 0.01& 0.34& 0.00& 0.09 \\
0.13& 0.05& 6.70& 0.02& 4540.86& 0.00& 0.87 \\
0.14& 0.09& 156.68& 0.03& 746.36& 0.01& 62.28 \\
\bottomrule
\end{tabular}

  \caption{Average cutting plane (CP) and branch-and-bound (BB) times}
  \label{tab:time}
\end{table}
\end{landscape}

\subsection{Overall algorithm} \label{sub:exp-overall}

Finally, we report results obtained with the overall exact algorithm. Table~\ref{tab:time}
presents the running times of the cutting plane and branch-and-bound phases of the algorithm.
As expected, the running time increases as the disk radius increases and the graph becomes more dense. On the other hand,
perhaps surprisingly, it decreases as $k$ increases. This is partly because the preprocessing
stage removes more nodes and edges when $k$ is larger but also because fewer edge conflicts occur when we use more colours. We also see that, for all values
of $k$ and disk radii, the running time of the cutting-plane stage is negligible compared
with the running time of the branch-and-bound stage. This is so, despite the fact that we are
using enumeration to solve the separation problems exactly.

\section{Conclusions} \label{se:conclusion}

In this paper we have defined and tackled the 2-level graph partitioning problem which,
as far as we are aware, has not previously been addressed in the optimization or data-mining
literature. Although this model was motivated by a problem in telecommunications it may
have other applications, such as in hierarchical clustering.

The instances encountered in our application were characterised by
small values of $k$ and $k'$, and large, sparse graphs. For instances
of this kind, we proposed a solution approach based on aggressive
preprocessing of the original graph, followed by a novel multi-layered cut-and-branch
scheme, which is designed to keep the LP as small as possible at each stage.
Along the way, we also derived new valid inequalities and symmetry-breaking constraints.

One possible topic for future research is the derivation of additional families of
valid inequalities, along with accompanying separation algorithms (either exact or heuristic).
Another interesting topic is the ``dynamic" version of our problem, in which devices
are switched on or off from time to time.

\section*{Appendix}

\noindent
{\bf Proof of Theorem \ref{th:yz-valid}:}
For a clique $C\subset V$ let $\mathcal{C}: C \to \{0, \ldots, kk'-1\}$ be a $kk'$-coloring of $C$. For $c = 0, \ldots, k-1$, let $S_{c}$ denote
$\big\{ v \in C : \, \mathcal{C}(v) \in \{c, c+ k, \ldots, c+(k'-1)k\} \big\}$ and for $c=0,\ldots,kk'-1$ let $W_{c}$ denote $\{ v \in C : \, \mathcal{C}(v) = c \}$. Note that, for $c=0,\ldots,k-1$,
\begin{equation}
  \label{eq:union-wc}
  \bigcup_{t=0}^{k'-1} W_{c+tk} = S_{c}.
\end{equation}
Fix $c\in \{0,\ldots, k-1\}$. By definition we have
\begin{equation*}
  y(S_{c}) = \binom{|S_{c}|}{2},
\end{equation*}
and
\begin{equation*}
  z(S_{c}) = \sum_{t=0}^{k'-1} \binom{|W_{c+tk}|}{2}.
\end{equation*}
Suppose $|S_{c}| = p_{c}k' + r_{c}$ where $0\leqslant  r_{c} < k'$. Then the second summation is minimized
when
\begin{align*}
  |W_{c+tk}| & = p_{c} + 1 & \text{ for } t=0,\ldots,r-1\\
                    & = p_{c}  & \text { for } t=r,\ldots,k'-1.
\end{align*}
Hence,
\begin{align*}
  z(S_{c}) &\geqslant r_{c} \binom{p_{c} + 1}{2} + (k'-r_{c})\binom{p_{c}}{2}\\
  &=\frac12\left(k'p_{c}^{2} + 2p_{c}r_{c} - k'p_{c}\right)\\
  &=\frac{1}{2k'}\left(k'^{2}p_{c}^{2} + 2 k' p_{c}r_{c} - k'^{2}p_{c}\right)\\
  &=\frac{1}{2k'}\left((k' p_{c} + r)(k' p_{c} + r - 1) + k'p_{c} + r_{c} - r_{c}^{2} + k'^{2}p_{c}\right)\\
  &=\frac{1}{k'}\binom{|S_{c}|}{2} + \frac{1}{2k'}\left(|S_{c}| - (k'^{2}p_{c} + r_{c}^{2})\right)\\
  &=\frac{1}{k'}y(S_{c}) + \frac{1}{2k'}\left(|S_{c}| - \left(k'(|S_{c}| - r_{c}) + r_{c}^{2}\right)\right)\\
  &=\frac{1}{k'}y(S_{c}) - \frac{1}{2k'}\left((k'-1)|S_{c}| - r_{c}(k' - r_{c})\right).\\
\end{align*}
Now, 
\begin{align*}
  z(C) &= \sum_{c=0}^{k-1} z(S_{c})\\
  &\geqslant \sum_{c=0}^{k-1}\left(\frac{1}{k'}y(S_{c}) - \frac{1}{2k'}\left((k' - 1)|S_{c}| - r_{c}(k' - r_{c})\right)\right)\\
  &\geqslant \frac{1}{k'}y(C) - \frac{1}{2k'}\left((k'-1)|C| - \sum_{c=0}^{k-1}r_{c}(k' - r_{c})\right).
\end{align*}
The last expression is minimized when $\sum_{c=0}^{k-1}r_{c}(k' - r_{c})$ is minimized. Noting that
$\sum_{c=0}^{k-1} r_{c} \bmod k' \geqslant |C| \bmod k' = R$, we see that this expression is minimized when
$r_0 = R$ and $r_{c} = 0$ for $c=1,\ldots,k-1$. Hence,
\begin{equation*}
  z(C) \geqslant \frac{1}{k'}y(C) - \frac{1}{2k'}\left((k'-1) |C| - R(k'-R)\right),
\end{equation*}
which is equivalent to the $(y,z)$-clique inequality (\ref{eq:yz}).\hfill$\Box$\\

\noindent
{\bf Proof of Theorem \ref{th:z-redundant}}:
Let $R = |C| \bmod kk'$. For the case $R=0$, the $z$-clique inequality is implied by other $z$-clique
inequalities (see \cite{chopra1993partition}). For the cases $R=1$ and $R = kk'-1$, we show that the
associated $z$-clique inequality is implied by $y$-clique and $(y,z)$-clique inequalities.

First, suppose that $R=1$, i.e., that $|C| = Tkk' + 1$ for some positive integer $T$.
In this case, the $z$-clique inequality takes the form:
\begin{equation} \label{eq:z-clique-case-1}
z(C) \, \geqslant \, \binom{T+1}{2} + \binom{T}{2} (kk' - 1) \: = \: k'(k/2)(T^2 -T) + T.
\end{equation}
The $y$-clique inequality on $C$ takes the form:
\begin{equation} \label{eq:y-clique-case-1}
y(C) \,\geqslant \, \binom{k'T+1}{2} + \binom{k'T}{2} (k-1) \: = \: k'(k/2)(k'T^2 - T) + k'T,
\end{equation}
and the $(y,z)$-clique inequality on $C$ takes the form:
\begin{equation} \label{eq:yz-clique-case-1}
k' z(C) - y(C) \geqslant - Tk \binom{k'}{2} = k'(k/2) (T - k'T).
\end{equation}
Adding inequalities \eqref{eq:y-clique-case-1} and \eqref{eq:yz-clique-case-1}, and dividing
the resulting inequality by $k'$ yields the $z$-clique inequality \eqref{eq:z-clique-case-1}.

Second, suppose that $R = kk' - 1$, i.e., that $|C| = k k' T + (kk'-1)$ for some positive integer $T$.
In this case, the $z$-clique inequality takes the form:
\begin{equation} \label{eq:z-clique-case-2}
z(C) \, \geqslant \, \binom{T+1}{2} (kk' - 1) + \binom{T}{2} \: = \: k'(k/2)(T+1)T - T.
\end{equation}
The $y$-clique inequality on $C$ takes the form:
\begin{eqnarray}
\nonumber
y(C) \geqslant & & \binom{Tk'+k'}{2}(k-1) + \binom{Tk'+k'-1}{2} \\
\label{eq:y-clique-case-2}
         = & & k'(k/2)(T+1)(Tk'+k'-1) - (Tk' + k' - 1).
\end{eqnarray}
and the $(y,z)$-clique inequality on $C$ can be written as follows:
\begin{eqnarray}
\nonumber
k' z(C) - y(C) \geqslant & & -(Tk+k-1)\binom{k'}{2} - \binom{k'-1}{2} \label{eq:yz-clique-case-2} \\
    = &  &  k'(k/2)(T+1)(1-k') + (k'-1).
\end{eqnarray}
Adding inequalities \eqref{eq:y-clique-case-2} and \eqref{eq:yz-clique-case-2}, and dividing the
resulting inequality by $k'$ yields the $z$-clique inequality \eqref{eq:z-clique-case-2}.\hfill$\Box$\\

\noindent
{\bf Proof of Theorem \ref{th:symmetry}:}
It is sufficient to show that for any colouring $\mathcal{C}:V \to \{0,\ldots,kk' - 1\}$
there exists a objective-preserving permutation $\sigma : \{0,\ldots,kk'-1\} \to \{0,\ldots,kk'-1\}$ such that
 \begin{equation}
   \label{eq:perm-sat}
   \phi(\sigma\circ\mathcal{C}(v)) > v \qquad \text{for } v=1,\ldots,n.
 \end{equation}
By objective-preserving we mean that:
 \begin{equation*}
   c_{1} \bmod k = c_{2} \bmod k \Longleftrightarrow \sigma(c_{1}) \bmod k = \sigma(c_{2}) \bmod k \qquad \text{for all } 0 < c_{1},c_{2}<kk'.
 \end{equation*}
We prove that such a permutation exists by induction on the number of nodes in $V$.
For the case $n=1$, the result holds trivially.

Suppose that the result holds $n \leq l$, and let us now consider the case $n=l+1$.
Using our induction hypothesis, we suppose, for notational convenience,
that $\phi(\mathcal{C}(v)) < v$ for $v = 1,\ldots l$. Now, let $\mathcal{C}(l+1) = pk + r$.
If $\phi(\mathcal{C}(l+1)) = p + r < l + 1$ then the colouring already satisfies
the required condition so we assume that this is not true.

We consider two cases. In the first case, suppose that $\mathcal{C}(v) \bmod k \neq r$
for all $v = 1,\ldots,l$. This can be split into two further subcases. In the first subcase,
we assume that $l+1 \leqslant k$. We now define an objective-preserving permutation which 
assigns the colour $l$ to node $l+1$. Note that by induction hypothesis, none of the nodes
$v = 1,\ldots, l$ are assigned to colour $l$. Define permutations $\sigma_{1}$ and
$\sigma_{2}$ as follows:
 \begin{align*}
   \sigma_{1}:& 
   \begin{cases}
     tk + r \mapsto tk + l & \\
     tk + l \mapsto tk + r & \text{ for } t = 0, \ldots, k'-1\\
     c \mapsto c & \text{ otherwise,}
   \end{cases}\\
   \sigma_{2}:&
   \begin{cases}
     pk + l \mapsto l & \\
     l \mapsto pk + l & \\
     c \mapsto c & \text{ otherwise,}
   \end{cases}
 \end{align*}
then $\sigma_{2}\circ\sigma_{1}\circ\mathcal{C}(l+1) = l$ and so
we have $\phi\left(\sigma_{2}\circ\sigma_{1}\circ\mathcal{C}(l+1)\right) = l < l+1$
as required.
  
In the second subcase we assume that $l+1 > k$. In this case, the following
objective-preserving permutation can be used:
 \begin{equation*}
   \sigma:
   \begin{cases}
     pk + r \mapsto r & \\
     r \mapsto pk + r & \\
     c \mapsto c & \text{ otherwise.}
   \end{cases}
 \end{equation*}
Then, $\phi(\sigma\circ\mathcal{C}(l+1)) = r < k < l+1$ as required.

In the second case, we suppose that the set $C_{rl} := \{1\leq v \leq l: \mathcal{C}(v) \bmod k = r\}$
is non-empty, and let $u = \max C_{rl}$. Then, for some $0 \leq s < p$ we have $\mathcal{C}(u) = sk + r$.
We now define the required permutation:
 \begin{equation*}
   \sigma:
   \begin{cases}
     pk + r \mapsto (s+1)k + r & \\
     (s+1)k + r \mapsto pk + r & \\
     c \mapsto c & \text{otherwise.}
   \end{cases}
 \end{equation*}
Then, $\phi\left(\sigma\circ\mathcal{C}(l+1)\right) = (s+1) + r < u + 1 \leq l+1$
as required, where the first inequality follows from our induction hypothesis.\hfill$\Box$

\bibliographystyle{alpha}
\bibliography{bibliography}

\end{document}